%
%
%

\documentclass[graybox]{svmult}


\usepackage{mathptmx}       
\usepackage{helvet}         
\usepackage{courier}        
\usepackage{type1cm}        
%
\usepackage{makeidx}         
\usepackage{graphicx}        
\usepackage{multicol}        
\usepackage[bottom]{footmisc}
\usepackage{verbatim}		
\usepackage{epstopdf}		
\DeclareGraphicsExtensions{.eps}
\usepackage{amssymb}


\makeindex             

\spnewtheorem*{prerequisites}{Suggested prerequisites}{\bf}{\small\em}

\spdefaulttheorem{project}{Research Project}{\bf}{}
\newcommand{\resproject}[1]{\begin{svgraybox} \begin{project} #1 \end{project} \end{svgraybox}}

\spdefaulttheorem{cproblem}{Challenge Problem}{\bf}{}


\begin{document}

\title*{Steady and Stable: Numerical Investigations of Nonlinear Partial Differential Equations}
\titlerunning{Numerical Investigations of Nonlinear PDEs}
\author{R. Corban Harwood}
\institute{R. Corban Harwood \at George Fox University, 414 North Meridian Street, Newberg, OR 97132, USA, \email{rharwood@georgefox.edu}}

%
%
\maketitle
\global\long\def\dx{\Delta x}
\global\long\def\dt{\Delta t}

\abstract*{A nonlinear partial differential equation is a nonlinear relationship between an unknown function and how it changes due to two or more input variables. A numerical method reduces such an equation to arithmetic for quick visualization, but this can be misleading if the method is not developed and operated carefully due to numerical oscillations, instabilities, and other artifacts of the programmed solution. This chapter provides a friendly introduction to error analysis of numerical methods for nonlinear partial differential equations along with modern approaches to visually demonstrate analytical results with confidence. These techniques are illustrated through several detailed examples with a relevant governing equation, and lead to open questions suggested for undergraduate-accessible research.}

\abstract{ 
A nonlinear partial differential equation is a nonlinear relationship between an unknown function and how it changes due to two or more input variables. A numerical method reduces such an equation to arithmetic for quick visualization, but this can be misleading if the method is not developed and operated carefully due to numerical oscillations, instabilities, and other artifacts of the programmed solution. This chapter provides a friendly introduction to error analysis of numerical methods for nonlinear partial differential equations along with modern approaches to visually demonstrate analytical results with confidence. These techniques are illustrated through several detailed examples with a relevant governing equation, and lead to open questions suggested for undergraduate-accessible research.}

\begin{prerequisites}
differential equations, linear algebra, some programming experience
\end{prerequisites}

\section{Introduction}
\label{sec:intro}

Mathematics is a language which can describe patterns in everyday life as well as abstract concepts existing only in our minds. Patterns exist in data, functions, and sets constructed around a common theme, but the most tangible patterns are visual. Visual demonstrations can help undergraduate students connect to abstract concepts in advanced mathematical courses. The study of partial differential equations, in particular, benefits from numerical analysis and simulation. 

Applications of mathematical concepts are also rich sources of visual aids to gain perspective and understanding. Differential equations are a natural way to model relationships between different measurable phenomena. Do you wish to predict the future behavior of a phenomenon? Differential equations do just that--after being developed to adequately match the dynamics involved. For instance, say you are interested in how fast different parts of a frying pan heat up on the stove. Derived from simplifying assumptions about density, specific heat, and the conservation of energy, the heat equation will do just that! In section \ref{sub:pdes} we use the heat equation (\ref{eqn:LinearTestIBVP}), called a test equation, as a control in our investigations of more complicated partial differential equations (PDEs). To clearly see our predictions of the future behavior, we will utilize numerical methods to encode the dynamics modeled by the PDE into a program which does basic arithmetic to approximate the underlying calculus.
To be confident in our predictions, however, we need to make sure our numerical method is developed in a way that keeps errors small for better accuracy. Since the method will compound error upon error at every iteration, the method must manage how the total error grows in a stable fashion. Else, the computed values will ``blow up'' towards infinity--becoming nonnumerical values once they have exceeded the largest number the computer can store. Such instabilities are adamantly avoided in commercial simulations using {\it adaptive} methods, such as the Rosenbrock method implemented as {\tt ode23s} in MATLAB \cite{Moler2008}. These adaptive methods reduce the step size as needed to ensure stability, but in turn increase the number of steps required for your prediction. Section \ref{sec:NumericalPDEs} gives an overview of numerical partial differential equations. Burden \cite{Burden2011} and Thomas \cite{Thomas1995} provide great beginner and intermediate introductions to the topic, respectively. In section \ref{sec:analysis} we compare basic and adaptive methods in verifying accuracy, analyzing stability through fundamental definitions and theorems, and finish by tracking oscillations in solutions. Researchers have developed many ways to reduce the effect of numerically induced oscillations which can make solutions appear infeasible \cite{Britz2003, Osterby2003}. Though much work has been done in studying the nature of numerical oscillations in ordinary differential equations \cite{CGraham2016, Gao2015}, some researchers have applied this investigation to nonlinear evolution PDEs \cite{Harwood2011, Lakoba2016P1}. Recently, others have looked at the stability of steady-state and traveling wave solutions to nonlinear PDEs \cite{HarleyMarangell2015,LeVeque2007,Nadin2011}, with more work to be done. We utilize these methods in our parameter analysis in section \ref{sec:parameters} and set up several project ideas for further research. Undergraduate students have recently published related work, for example, in steady-state and stability analysis \cite{Aron2014, Sarra2011} and other numerical investigations of PDEs \cite{Juhnke2015}.

\section{Numerical Differential Equations}
\label{sec:NumericalPDEs}

In applying mathematics to real-world problems, a differential equation can encode information about how a quantity changes in time or space relative to itself more easily than forming the function directly by fitting the data. The mass of a bacteria colony is such a quantity. In this example, tracking the intervals over which the population's mass doubles can be related to measurements of the population's mass to find its exponential growth function. Differential equations are formed from such relationships. Finding the pattern of this relationship allows us to solve the differential equation for the function we seek. This pattern may be visible in the algebra of the function, but can be even more clear in graphs of numerical solutions.


\subsection{Overview of Differential Equations}
\label{sub:pdes}

An ordinary differential equation (ODE) is an equation involving derivatives of a single variable whose solution is a function which satisfies the given relationship between the function and its derivatives. Because the integration needed to undo each derivative introduces a constant of integration, conditions are added for each derivative to specify a single function. The order of a differential equation is the highest derivative in the equation. Thus, a first order ODE needs one condition while a third order ODE needs three. 
\begin{definition}\label{def:IVP}
An initial value problem (IVP) with a first order ODE is defined as
\begin{eqnarray}\label{eqn:firstode}
 \frac{dx}{dt} &=& f(x,t)\\\nonumber
 x(t_0)&=&x_0,
\end{eqnarray}
where $t$ is the independent variable, $x\equiv x(t)$ is the dependent variable (also called the unknown function) with initial value of $x(t_0)=x_0$, and $f(x,t)$ is the {\it slope function}.
\end{definition}



As relationships between a function and its derivatives, a PDE and an ODE are much alike. Yet PDEs involve multivariable functions and each derivative is a partial derivative in terms of one or more independent variables. Recall that a partial derivative focuses solely on one variable when computing derivatives. For example, $\frac{\partial}{\partial t} e^{-2t}\sin(3x)=-2e^{-2t}\sin(3x)$. Similar to the ways ordinary derivatives are notated 
, partial derivatives can be written in operator form or abbreviated with subscripts (e.g. $\frac{\partial^2 u}{\partial x^2} = u_{xx}$).
{\it Linear} PDEs are composed of a sum of scalar multiples of the unknown function, its derivatives, as well as functions of the independent variables. A PDE is {\it nonlinear} when it has term which is not a scalar multiple of an unknown, such as $\rho u(1-u)$ in (\ref{eqn:FisherIBVP}) or an arbitrary function of the unknown. To introduce problems involving PDEs, we begin with the simplest type of boundary conditions, named after mathematician Peter Dirichlet (1805-1859), and a restriction to first order in time (called evolution PDEs). Note that the number of conditions needed for a unique solution to a PDE is the total of the orders in each independent variable \cite{Thomas1995}. Sufficient number of conditions, however, does not prove uniqueness. The maximum principle and energy method are two ways uniqueness of a solution can be proven \cite{Evans2010}, but such analysis is beyond the scope of this chapter. 

\begin{definition}\label{def:IBVP}
An initial boundary value problem (IBVP) with a first order (in time) evolution PDE with Dirichlet boundary conditions is defined as
\begin{eqnarray}\label{eqn:IBVP}
 \frac{\partial u}{\partial t} &=& f\left(x,t, u,\frac{\partial u}{\partial x},\frac{\partial^2 u}{\partial x^2},...\right)\\ \nonumber
u(x,0) = u_0(x),\\ \nonumber
u(0,t) = a\\ \nonumber
u(L,t) = b
\end{eqnarray}
where $x,t$ are the independent variables, $u$ is the dependent variable (also called the unknown function) with initial value of $u(x,t)=u_0(x)$ and boundary values $u=a,b$ whenever $x=a,b$ respectively, and $f$ can be any combination of independent variables and any spatial partials of the dependent variable.
\end{definition}

\begin{example}
Let us analyze the components of the following initial boundary value problem:
\begin{eqnarray}\label{eqn:FisherIBVP}
u_t = \delta u_{xx} + \rho u(1-u),\\\nonumber
u(x,0) = u_0(x),\\\nonumber
u(0,t) = 0\\\nonumber
u(10,t) = 1
\end{eqnarray}
First, the PDE is nonlinear due to the $u(1-u)$ term. Second, the single initial condition matches the 1st order in time ($u_t$) and the two boundary values match the 2nd order in space ($u_{xx}$). Thus, this IBVP has sufficient number of conditions needed for a unique solution which supports but does not prove uniquess. Third, parameters $\delta,\rho$ and the initial profile function $u_0(x)$ are kept unspecified. 
\end{example}

This reaction-diffusion equation is known as the Fisher-KPP equation for the four mathematicians who all provided great analytical insight into it: Ronald Fisher (1890-1962), Andrey Kolmogorov (1903-1987), Ivan Petrovsky (1901-1973), and Nikolaj Piscounov (1908-1977) \cite{Fisher1937, Kolmogorov1991}. Though it is more generally defined as an IVP, in this chapter we study it in its simpler IBVP form. Coeffients $\delta,\rho$ represent the diffusion and reaction rates and varying their values lead to many interesting behaviors. The Fisher-KPP equation models how a quantity switches between phases, such as genes switching to advantageous alleles where it was originally studied \cite{Fisher1937}. 

The form of the initial condition function, $u_0(x)$, is kept vague due to the breadth of physically meaning and theoretically interesting functions which could initialize our problem. Thus, we will use a polynomial fitting functions {\tt polyfit()} and {\tt polyval()} in example code {\tt PDE\_Analysis\_Setup.m} to set up a polynomial of any degree which best goes through the boundary points and other provided points. This description of the initial condition allows us to explore functions constrained by their shape within the bounds of the equilibrium point $\bar{u}$ analyzed in section \ref{sub:steadystate}.

\begin{figure}[t]
\centerline{
\includegraphics[scale=.45]{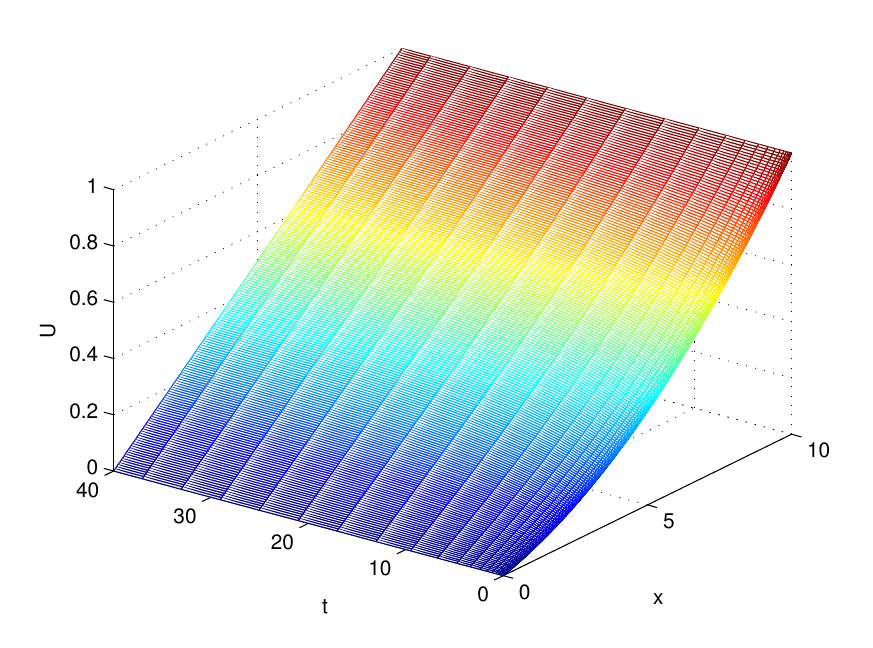} 
\includegraphics[scale=.45]{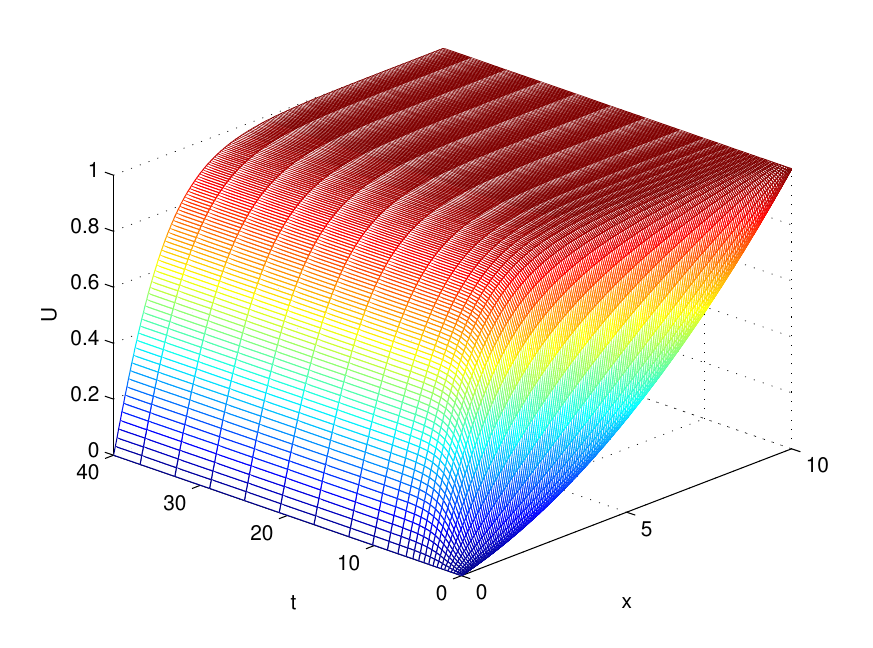} 
}
\caption{Comparison of numerical solutions using the adaptive Rosenbrock method ({\tt ode23s} in MATLAB) for the (left) linear Test equation (\ref{eqn:LinearTestIBVP}) using $\rho=0$ and (right) Fisher-KPP equation (\ref{eqn:FisherIBVP})  using $\rho=1$, where all other parameters use the default values of $a=0, b=1, L=10, \delta=1, \dx=0.05, {\rm degree}=2, c=\frac{1}{3}$ and initial condition from line 8 in {\tt PDE\_Analysis\_Setup.m} found in Appendix \ref{App:Setup}.}
\label{fig:lf5ode23s}
\end{figure}

\begin{exercise}
Consider the PDE 
\begin{equation}\label{eqn:ex:PDE}
u_t = 4u_{xx}.
\end{equation} 
\renewcommand{\labelenumi}{\alph{enumi})}
\begin{enumerate}
\item Determine the order in time and space and how many initial and boundary conditions are needed to define a unique solution. 
\item Using Definition (\ref{def:IBVP}) as a guide, write out the IBVP for an unknown $u(x,t)$ such that it has an initial profile of $\sin(x)$, boundary value of $0$ whenever $x=0$ and $x=\pi$, and is defined for $0\leq x\leq 1,t\geq0$. 
\item Verify that you have enough initial and boundary conditions as determined previously.
\item Verify that the function,
\begin{equation}
u(x,t)=e^{-4t}\sin(x),
\end{equation}
is a solution to equation (\ref{eqn:ex:PDE}) by evaluating both sides of the PDE and checking the initial and boundary conditions.
\end{enumerate}
\renewcommand{\labelenumi}{\arabic{enumi}.}
\end{exercise}


Error is more difficult to analyze for nonlinear PDEs, so it is helpful to have an associated linear version of your equation to analyze first. We will compare our analysis of reaction-diffusion equations to the Test equation,
\begin{eqnarray}\label{eqn:LinearTestIBVP}
u_t = \delta u_{xx},\\\nonumber
u(x,0) = u_0(x),\\\nonumber
u(0,t) = 0\\\nonumber
u(10,t) = 1
\end{eqnarray}
which is the heat equation in one dimension with constant heat forced at the two end points \cite{Burden2011}. Note, this is not a direct linearization of the Fisher-KPP equation (\ref{eqn:FisherIBVP}), but it behaves similarly for large values of $\delta$. Figure \ref{fig:lf5ode23s} provides a comparison of the solutions to the linear Test equation (\ref{eqn:LinearTestIBVP}) and the Fisher-KPP equation (\ref{eqn:FisherIBVP}) for $\delta=1$. Note how the step size in time for both solutions increases dramatically to have large, even spacing as the solution nears the steady-state solution. Adaptive methods, like MATLAB's ode23s, adjust to a larger step size as the change in the solution diminishes.

\subsection{Overview of Numerical Methods}
\label{sub:methods}
Numerical methods are algorithms which solve problems using arithmetic computations instead of algebraic formulas. They provide quick visualizations and approximations of solutions to problems which are difficult or less helpful to solve exactly.

Numerical methods for differential equations began with methods for approximating integrals: starting with left and right Riemann sums, then progressing to the trapezoidal rule, Simpson's rule and others to increase accuracy more and more efficiently. Unfortunately, the value of the slope function for an ODE is often unknown so such approximations require modifications, such as Taylor series expansions for example, to predict and correct slope estimates. Such methods for ODEs can be directly applied to evolution PDEs (\ref{eqn:IBVP}). Discretizing in space, we create a system of ordinary differential equations with vector ${\bf U}(t)$ with components $U_m(t)$ approximating the unknown function $u(x,t)$ at discrete points $x_m$. The coefficients of linear terms are grouped into matrix $D(t)$ and nonlinear terms are left in vector function ${\bf R}(t,{\bf U})$. In the following analysis, we will assume that $t$ is not explicit in the matrix $D$ or nonlinear vector ${\bf R}({\bf U})$ to obtain the general form of a reaction-diffusion model (\ref{eqn:IBVP})
\begin{equation}\label{eqn:generalRD}
\frac{d {\bf U}}{dt} = D {\bf U} + {\bf R}({\bf U})+{\bf B}.
\end{equation}

\begin{example}\label{ex:discretization}
We will discretize the Fisher-KPP equation (\ref{eqn:FisherIBVP}) in space using default parameter values $a=0, b=1, L=10, \delta=1, \dx=0.05, \rho=1$ in {\tt PDE\_Analysis\_ Setup.m} found in Appendix \ref{App:Setup}. See Figure (\ref{fig:lf5ode23s}) (right) for the graph. Evenly dividing the interval $[0,10]$ with $\dx=0.05=\frac{1}{20}$ results in 199 spatial points, $x_m$, where the function is unknown (plus the two end points where it is known: $U_0=a,U_{200}=b$). Using a centered difference approximation of $U_{xx}$ \cite{Burden2011},
\begin{eqnarray}
\left( U_{xx}\right)_1 &\approx & \frac{a-2U_1+U_{2}}{\dx^2},\\ \nonumber
\left( U_{xx}\right)_m &\approx & \frac{U_{m-1}-2U_m+U_{m+1}}{\dx^2},\ 2\leq m\leq 198,\\ \nonumber
\left( U_{xx}\right)_{199} &\approx & \frac{U_{198}-2U_{199}+b}{\dx^2},
\end{eqnarray}
the discretization of (\ref{eqn:FisherIBVP}) can be written as
\begin{eqnarray}
\frac{d {\bf U}}{dt} &=& D {\bf U} + {\bf R}({\bf U})+{\bf B},\\ \nonumber
D &=& \frac{\delta}{\dx^2}	\left[ \begin{array}{cccc}
	-2 & 1 &... &0\\
	1 & -2 & \ddots &... \\
	...&\ddots &\ddots &1\\
	0 &...& 1 & -2
	\end{array}\right]\\ \nonumber
{\bf R}({\bf U}) &=& 	\rho \left[ \begin{array}{c}
	U_1(1-U_1)\\
	...\\
	U_{199}(1-U_{199})\\
	\end{array}\right] = \rho\left(I-{\rm diag}({\bf U})\right){\bf U}\\  \nonumber
{\bf B} &=& 	\frac{\delta}{\dx^2}\left[ \begin{array}{c}
	a\\
	0\\
	...\\
	0\\
	b\\
	\end{array}\right]
\end{eqnarray}
with a tridiagonal matrix $D$, a nonlinear vector function ${\bf R}$ which can be written as a matrix product using diagonal matrix formed from a vector (diag()), and a sparse constant vector ${\bf B}$ which collects the boundary information.
\end{example}

By the Fundamental Theorem of Calculus \cite{Burden2011}, the exact solution to (\ref{eqn:generalRD}) over a small interval of time $\dt$ is found by integration from $t_n$ to $t_{n+1}=t_n+\dt$ as
\begin{equation}\label{eqn:numerical-integral}
{\bf U}^{n+1} = {\bf U}^n + \int_{t_n}^{t_{n+1}}\left(D {\bf U}(t) + {\bf R}({\bf U(t)})+{\bf B}\right)dt,
\end{equation}
where each component $U_m(t)$ has been discretized in time to create an array of components $U_m^n$ approximating the solution $u(x_m,t_n)$. 
Note that having the unknown function ${\bf U}(t)$ inside the integral (\ref{eqn:numerical-integral}) makes it impossible to integrate exactly, so we must approximate. Approximating with a left Riemann sum results in the Forward Euler (a.k.a. classic Euler) method \cite{Moler2008},
\begin{equation}\label{eqn:FWDEuler}
{\bf U}^{n+1} = {\bf U}^n + \dt\left(D( {\bf U}^n + {\bf R}({\bf U}^n)+{\bf B}\right),
\end{equation}
while approximating with a right Riemann sum results in the Backward Euler method \cite{Moler2008}
\begin{equation}\label{eqn:BWDEuler}
{\bf U}^{n+1} = {\bf U}^n + \dt\left(D {\bf U}^{n+1} + {\bf R}({\bf U}^{n+1})+{\bf B}\right).
\end{equation}

Although approximating integrals with left and right Riemann sums is a similar task, in solving differential equations, they can be very different. Forward Euler (\ref{eqn:FWDEuler}) is referred to as an {\it explicit} method since the unknown ${\bf U}^{n+1}$ can be directly computed in terms of known quantities such as the current known approximation ${\bf U}^n$, while Backward Euler  (\ref{eqn:BWDEuler}) is referred to as an {\it implicit} method since the unknown ${\bf U}^{n+1}$ is solved in terms of both known ${\bf U}^n$ and unknown ${\bf U}^{n+1}$ quantities. Explicit methods are simple to set up and compute, while implicit methods may not be solvable at all. If we set ${\bf R}({\bf U})\equiv {\bf 0}$ to make equation (\ref{eqn:generalRD}) linear, then an implicit method can be easily written in the explicit form, as shown in the Example \ref{ex:BWD}. Otherwise, an unsolvable implicit method can be approximated with a numerical root-finding method such as Newton's method (\ref{eqn:Newton}) which is discussed in section \ref{sub:Newtonmethod}, but nesting numerical methods is much less efficient than implementing an explicit method as it employs a truncated Taylor series to mathematically approximate the unknown terms. The main reasons to use implicit methods are for stability, addressed in section \ref{sub:stability}. The following examples demonstrate how to form the two-level matrix form. 
\begin{definition}\label{def:two-level}
A two-level numerical method for an evolution equation (\ref{eqn:IBVP}) is an iteration which can be written in the two-level matrix form
\begin{equation}\label{eqn:two-level_method}
{\bf U}^{n+1}=M\ {\bf U}^{n}+{\bf N},
\end{equation}
where $M$ is the combined transformation matrix and ${\bf N}$ is the
resultant vector. Note, both $M$ and ${\bf N}$ may update every iteration, especially when the PDE is nonlinear, but for many basic problems, $M$ and ${\bf N}$ will be constant.
\end{definition}


\begin{example}\label{ex:FWD}(Forward Euler)
Determine the two-level matrix form for the Forward Euler method for the Fisher-KPP equation (\ref{eqn:FisherIBVP}). Since Forward Euler is already explicit, we simply factor out the ${\bf U}^n$ components from equation (\ref{eqn:FWDEuler}) to form
\begin{eqnarray}
{\bf U}^{n+1} &=& {\bf U}^n + \dt\left(D( {\bf U}^n + \rho \left(I-{\rm diag}({\bf U}^n)\right){\bf U}^n+{\bf B}\right),\\ \nonumber
&=& M{\bf U}^n + {\bf N},\\ \nonumber
M &=& \left(I + \dt D +\dt\rho \left(I-{\rm diag}({\bf U}^n)\right)\right),\\ \nonumber
{\bf N} &=& \dt {\bf B},
\end{eqnarray}
where $I$ is the identity matrix, ${\bf N}$ is constant, and $M$ updates with each iteration since it depends on ${\bf U}^n$.
\end{example}

\begin{example}\label{ex:BWD}(Linear Backward Euler)
Determine the two-level matrix form for the Backward Euler method for the Test equation (\ref{eqn:LinearTestIBVP}). In the Backward Euler method (\ref{eqn:BWDEuler}), the unknown ${\bf U}^{n+1}$ terms can be grouped and the coefficient matrix $I-\dt D$ inverted to write it explicitly as
\begin{eqnarray}
{\bf U}^{n+1} &=&  M{\bf U}^n+{\bf N},\\ \nonumber
M &=& \left(I - \dt D \right)^{-1}\\ \nonumber
N &=& \dt\left(I - \dt D \right)^{-1}{\bf B}
\end{eqnarray}
where the method matrix $M$ and additional vector ${\bf N}$ are constant.
\end{example}

Just as the trapezoid rule takes the average of the left and right Riemann sums, the Crank-Nicolson method (\ref{eqn:CN}) averages the Forward and Backward Euler methods \cite{CrankNicolson1947}.
\begin{equation}\label{eqn:CN}
{\bf U}^{n+1} = {\bf U}^n + \frac{\dt}{2} D\left( {\bf U}^{n} +  {\bf U}^{n+1}\right) + \frac{\dt}{2}\left({\bf R}({\bf U}^{n}) + {\bf R}({\bf U}^{n+1})\right)+\dt{\bf B}.
\end{equation}
One way to truncate an implicit method of a nonlinear equation into an explicit method is called a {\it semi-implicit} method \cite{Burden2011}, which treats the nonlinearity as known information (evaluated at current time $t_n$) leaving the unknown linear terms at $t_{n+1}$. For example, the semi-implicit Crank-Nicolson method is
\begin{equation}\label{eqn:CN_SI}
{\bf U}^{n+1} = \left(I - \frac{\dt}{2}D\right)^{-1} \left({\bf U}^n + \frac{\dt}{2}D {\bf U}^n + \dt {\bf R}({\bf U}^n) +{\bf B}\right).
\end{equation}

\begin{exercise}
\renewcommand{\labelenumi}{\alph{enumi})}
After reviewing Example \ref{ex:FWD} and Example \ref{ex:BWD}, complete the following for the semi-implicit Crank-Nicolson method (\ref{eqn:CN_SI}).
\begin{enumerate}
\item Determine the two-level matrix form for the Test equation (\ref{eqn:LinearTestIBVP}). Note, set ${\bf R = 0}$.
\item *Determine the two-level matrix form for the Fisher-KPP equation (\ref{eqn:FisherIBVP}).
\end{enumerate}
\renewcommand{\labelenumi}{\arabic{enumi}.}
*See section \ref{sub:stability} for the answer and its analysis.
\end{exercise}

Taylor series expansions can be used to prove that while the Crank-Nicolson method (\ref{eqn:CN}) for the linear Test equation (\ref{eqn:LinearTestIBVP}) is second order accurate (See Definition \ref{def:order-accuracy} in section \ref{App:AccuracyProof}), the semi-implicit Crank-Nicolson method (\ref{eqn:CN_SI}) for a nonlinear PDE is only first order accurate in time. To increase truncation error accuracy, unknown terms in an implicit method can be truncated using a more accurate explicit method. For example, blending the Crank-Nicolson method with Forward Euler approximations creates the Improved Euler Crank-Nicolson method, which is second order accurate in time for nonlinear PDEs. 
\begin{eqnarray}\label{eqn:CN_IE}
{\bf U}^* &=& {\bf U}^n + \dt\left(D {\bf U}^n + {\bf R}({\bf U}^n)+{\bf B}\right),\\\nonumber
{\bf U}^{n+1} &=& {\bf U}^n + \frac{\dt}{2} D\left( {\bf U}^{n} +  {\bf U}^{*}\right) + \frac{\dt}{2}\left({\bf R}({\bf U}^{n}) + {\bf R}({\bf U}^{*})\right)+\dt{\bf B}.
\end{eqnarray}
This improved Euler Crank-Nicolson method (\ref{eqn:CN_IE}) is part of the family of Runge-Kutta methods which embed a sequence of truncated Taylor expansions for implicit terms to create an explicit method of any given order of accuracy \cite{Burden2011}. 
Proofs of the accuracy for the semi-implicit (\ref{eqn:CN_SI}) and improved Euler (\ref{eqn:CN_IE}) methods are included in Appendix \ref{App:AccuracyProof}.

\begin{exercise}
\renewcommand{\labelenumi}{\alph{enumi})}
After reviewing Example \ref{ex:FWD} and Example \ref{ex:BWD}, complete the following for the Improved Euler Crank-Nicolson method (\ref{eqn:CN_IE}).
\begin{enumerate}
\item Determine the two-level matrix form for the Test equation (\ref{eqn:LinearTestIBVP}). Note, set ${\bf R = 0}$.
\item Determine the two-level matrix form for the Fisher-KPP equation (\ref{eqn:FisherIBVP}).
\end{enumerate}
\renewcommand{\labelenumi}{\arabic{enumi}.}
\end{exercise}

\subsection{Overview of Software}
\label{sub:software}
Several software have been developed to compute numerical methods. Commercially, MATLAB, Mathematica, and Maple are the best for analyzing such methods, though there are other commercial software like COMSOL which can do numerical simulation with much less work on your part. Open-source software capable of the same (or similar) numerical computations, such as Octave, SciLab, FreeFEM, etc. are also available. Once the analysis is complete and methods are fully tested, simulation algorithms are trimmed down and often translated into Fortran or C/C++ for efficiency in reducing compiler time.

We will focus on programming in MATLAB, created by mathematician Cleve Moler (born in 1939), one of the authors of the LINPACK and EISPACK scientific subroutine libraries used in Fortran and C/C++ compilers \cite{Moler2008}. Cleve Moler originally created MATLAB to give his students easy access to these subroutines without having to write in Fortran or C themselves. In the same spirit, we will be working with simple demonstration programs, listed in the appendix, to access the core ideas needed for our numerical investigations. Programs {\tt PDE\_Solution.m} (Appendix \ref{App:Solutions}), {\tt PDE\_Analysis\_Setup.m} (Appendix \ref{App:Setup}), and  {\tt Method\_Accuracy\_Verification.m} (Appendix \ref{App:Accuracy}) are MATLAB scripts, which means they can be run without any direct input and leave all computed variables publicly available to analyze after they are run. Programs {\tt CrankNicolson\_SI.m} (Appendix \ref{App:CN}) and {\tt Newton\_System.m} (Appendix \ref{App:Newton}) are MATLAB functions, which means they may require inputs to run, keep all their computations private, and can be effectively embedded in other functions or scripts. All demonstrations programs are run through {\tt PDE\_Solution.m}, which is the main program for this group.

\begin{example}\label{ex:demo}
The demonstration programs can be either downloaded from the publisher or typed into five separate MATLAB files and saved according to the name at the top of the file (e.g. {\tt PDE\_Analysis\_Setup.m}). To run them, open MATLAB to the folder which contains these five programs. In the command window, type {\tt help PDE\_Solution} to view the comments in the header of the main program. Then type {\tt PDE\_Solution} to run the default demonstration. This will solve and analyze the Fisher-KPP equation (\ref{eqn:FisherIBVP}) using the default parameters, produce five graph windows, and report three outputs on the command window. The first graph is the numerical solution using MATLAB's built-in implementation of the Rosenbrock method ({\tt ode23s}), which is also demonstrated in Figure \ref{fig:lf5ode23s} (right). The second graph plots the comparable eigenvalues for the semi-implicit Crank-Nicolson method (\ref{eqn:CN_SI}) based upon the maximum $\dt$ step used in the chosen method (Rosenbrock by default).  The third graph shows a different steady-state solution to the Fisher-KPP equation (\ref{eqn:FisherIBVP}) found using Newton's method (\ref{eqn:Newton}). The fourth graph shows the rapid reduction of the error of this method as the Newton iterations converge. The fifth graph shows the instability of the Newton steady-state solution by feeding a noisy perturbation of it back into the Fisher-KPP equation (\ref{eqn:FisherIBVP}) as an initial condition. This noisy perturbation is compared to round-off perturbation in Figure \ref{fig:lf5convergence} to see how long this Newton steady-state solution can endure. Notice that the solution converges back to the original steady-state solution found in the first graph.
\end{example}
To use the semi-implicit Crank-Nicolson method (\ref{eqn:CN_SI}) instead of MATLAB's ode23s, do the following. Now the actual eigenvalues of this method are plotted in the second graph. 
\begin{exercise}
Open {\tt PDE\_Solution.m} in the MATLAB Editor. Then comment lines 7-9 (type a \% in front of each line) and uncomment lines 12-15 (remove the \% in front of each line). Run {\tt PDE\_Solution}. Verify that the second graph matches Figure \ref{fig:f5eig}.
\end{exercise}
The encoded semi-implicit Crank-Nicolson method (\ref{eqn:CN_SI}) uses a fixed step size $\dt$, so it is generally not as stable as MATLAB's built-in solver. It would be best to now uncomment lines 7-9 and comment lines 12-15 to return to the default form before proceeding.  The main benefit of the ode23s solver is that it is adaptive in choosing the optimal $\dt$ step size and adjusting it for regions where the equation is easier or harder to solve than before. This method is also sensitive to {\it stiff} problems, where stability conditions are complicated or varying. MATLAB has several other built-in solvers to handle various situations. You can explore these by typing {\tt help ode}.

Once you have run the default settings, open up {\tt PDE\_Analysis\_Setup} in the editor and tweak the equation parameter values {\tt a,b,L,delta,rho,degree,c} and logistical parameter {\tt dx}. After each tweak, make sure you run the main program {\tt PDE\_Solution}. The logistical parameters {\tt tspan,dt} for the numerical method can also be tweaked in {\tt PDE\_Solution}, and an inside view of Newton iterations can be seen by uncommenting lines 38-39. Newton's method is covered in Section \ref{sub:Newtonmethod}. A solution with Newton's method is demonstrated in \ref{fig:f1steadystate}(left), while all of the iterations are graphed in Figure \ref{fig:f1steadystate}(right). Note that Figure \ref{fig:f1steadystate}(right) is very similar to a solution which varies over time, but it is not. The graph of the iterations demonstrates how Newton's method seeks better and better estimates of a fixed steady-state solution discussed in Section \ref{sub:steadystate}.
\begin{exercise}
In {\tt PDE\_Analysis\_Setup}, set parameter values, $a=0, b=0, L=10, \delta = \frac{1}{10}, \dx=\frac{1}{20}, \rho=1, {\rm degree}=2, c=1$. Then, in {\tt PDE\_Solution}, uncomment lines 38-39 and run it. Verify that the third and fourth graphs matches Figure \ref{fig:f1steadystate}.
\end{exercise}
Notice that the iterations of Newton's method in Figure \ref{fig:f1steadystate}(right) demonstrate oscillatory behavior in the form of waves which diminish in amplitude towards the steady-state solution. These are referred to as stable numerical oscillations similar to the behavior of an underdamped spring \cite{Burden2011}. These stable oscillations suggest that the steady-state solution is stable (attracting other initial profiles to it), but due to the negative values in the solution in Figure \ref{fig:f1steadystate}(left), it is actually an unstable steady-state for Fisher-KPP equation (\ref{eqn:FisherIBVP}). You can see this demonstrated in the last graph plotted when you ran  {\tt PDE\_Solution} where there is a spike up to a value around $-4 \times 10^12$. This paradox demonstrates that not all steady-state solutions are stable and that the stability of Newton's method differs from the stability of a steady-state solution to an IBVP.

\begin{figure}
\centerline{
\includegraphics[scale=.35]{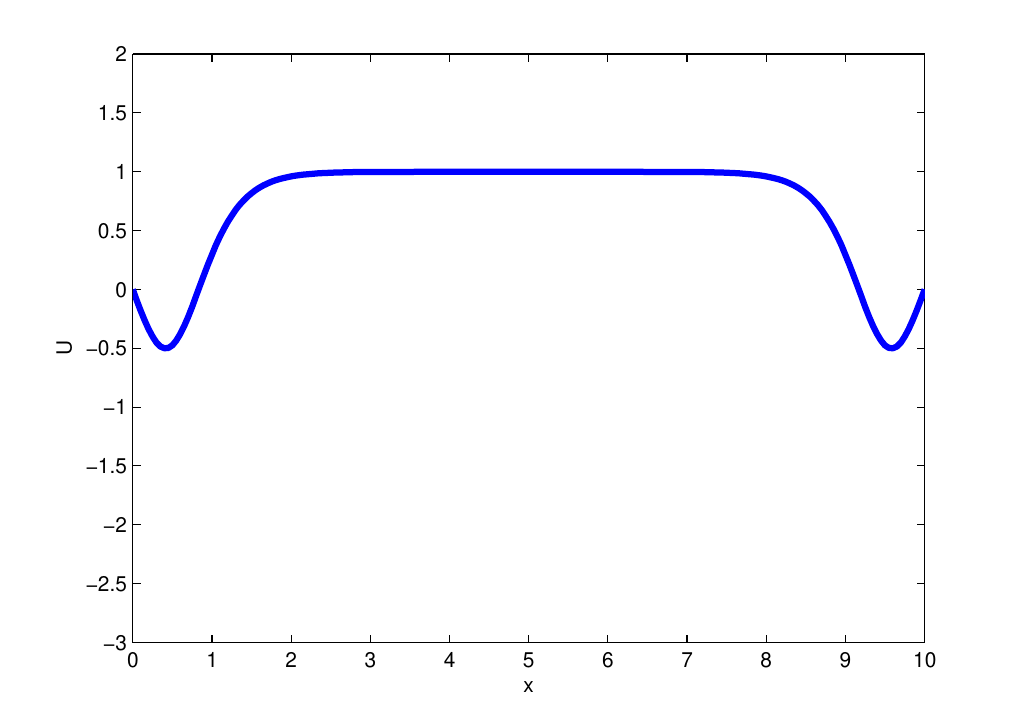} 
\includegraphics[scale=.45]{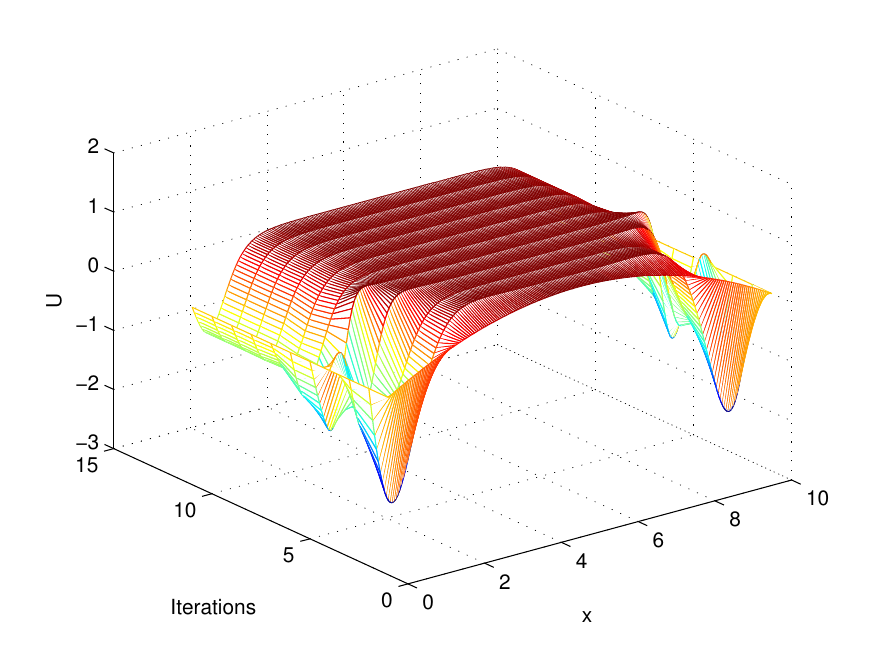} 
}
\caption{An example steady-state solution using Newton's method (left) and the iterations to that steady-state (right) using the parameter values $a=0, b=0, L=10, \delta = \frac{1}{10}, \dx=\frac{1}{20}, \rho=1, {\rm degree}=2, c=1$ and initial condition from line 8 in {\tt PDE\_Analysis\_Setup.m} found in Appendix \ref{App:Setup}. Also, uncomment lines 38-39 in {\tt PDE\_Solution.m} found in Appendix \ref{App:Solutions}.} 
\label{fig:f1steadystate}
\end{figure}

Some best practices of programming in MATLAB are to clean up before running new computations, preallocate memory, and store calculations which are used more than once. Before new computations are stored in a script file, you can clean up your view of previous results in the command window ({\tt clc}), delete previously held values and structure of all ({\tt clear all}) or selected ({\tt clear {\it name1, name2}}) variables, and close all ({\tt close all}) or selected ({\tt close handle1, handle2}) figures. The workspace for running the main program {\tt PDE\_Solution} is actually cleared in line 5 of {\tt PDE\_Analysis\_Setup} so that this supporting file can be run independently when needed. 

When you notice the same calculation being computed more than once in your code, store it as a new variable to trim down the number of calculations done for increased efficiency. Most importantly, preallocate the structure of a vector or matrix that you will fill in values with initial zeros ({\tt zeros(columns,rows)}), so that MATLAB does not create multiple copies of the variable in memory as you fill it in. The code {\tt BCs = zeros(M-2,1);} in line 23 of {\tt PDE\_Analysis\_Setup.m} is an example of preallocation for a vector. Preallocation is one of most effective ways of speeding up slow code.
\begin{exercise}\label{prob:preallocate}
Find the four (total) lines of code in {\tt Newton\_System.m}, {\tt Method\_ Accuracy\_Verification.m}, and {\tt CrankNicolson\_SI.m} which preallocate a variable.
\end{exercise}

\section{Error Analysis}
\label{sec:analysis}
To encourage confidence in the numerical solution it is important to support the theoretical results with numerical demonstrations. For example, a theoretical condition for stability or oscilation-free behavior can be demonstrated by comparing solutions before and after the condition within a small neighborhood of it. On the other hand, the order of accuracy can be demonstrated by comparing subsequent solutions over a sequence of step sizes, as we will see in section \ref{sub:verifyerror}. Demonstrating stability ensures {\it when}, while accuracy ensures {\it how rapidly}, the approximation will converge to the true solution. Showing when oscillations begin to occur prevents any confusion over the physical dynamics being simulated, as we will investigate in section \ref{sub:oscillations}.

\subsection{Verifying Accuracy}
\label{sub:verifyerror}
Since numerical methods for PDEs use arithmetic to approximate the underlying calculus, we expect some error in our results, including inaccuracy measuring the distance from our target solution as well as some imprecision in the variation of our approximations. We must also balance the mathematical accuracy in setting up the method with the round-off errors caused by computer arithmetic and storage of real numbers in a finite representation. As we use these values in further computations, we must have some assurance that the error is minimal. Thus, we need criteria to describe how confident we are in these results. 

Error is defined as the difference between the true value $u(x_m,t_n)$ and approximate value $U_m^n$, but this value lacks the context given by the magnitude of the solution's value and focuses on the error of individual components of a solution's vector.
Thus, the relative error $\epsilon$ is more meaningful as it presents the absolute error relative to the true value as long as $u(x_m,t_n)\neq 0$ under a suitable norm such as the max norm $||\cdot||_\infty$.
\begin{definition}\label{def:rel-err}
The {\it relative error} $\epsilon$ for a vector solution ${\bf U}^n$ is the difference between the true value $u(x_m,t_n)$ and approximate value $U_m^n$ under a suitable norm $||\cdot||$, relative to the norm of the true value as
\begin{equation}
\epsilon =\frac{||{\bf u}({\bf x},t_n) - {\bf U}^n||}{||{\bf u}({\bf x},t_n)||}\times 100\%.
\end{equation}
\end{definition}

The significant figures of a computation are those that can be claimed with confidence. They correspond to a number of confident digits plus one estimated digit, conventionally set to half of the smallest scale division on the measurement device, and specified precisely in Definition \ref{def:sig-figs}.
\begin{definition}\label{def:sig-figs}
The value $U_m^n$ approximates $u(x_m,t_n)$ to $N$ {\it significant digits} if $N$ is the largest non-negative integer for which the relative error is bounded by the significant error $\epsilon_s(N)$
\begin{equation}
\epsilon_s(N) = \left(5 \times 10^{-N}\right)\times100\%
\end{equation}
\end{definition}
To ensure all computed values in an approximation have about $N$ significant figures, definition \ref{def:sig-figs} implies 
\begin{equation}\label{eqn:sig-bound}
N+1>\log_{10}\left(\frac{5}{\epsilon}\right)>N,
\end{equation}

Although the true value is not often known, the relative error of a previous approximation can be estimated using the best available approximation in place of the true value. 

\begin{definition}\label{def:eqn:app-rel-err}
For an iterative method with improved approximations ${\bf U}^{(0)},{\bf U}^{(1)},$ ..., ${\bf U}^{(k)}$, ${\bf U}^{(k)}$, the {\it approximate relative error} at position $(x_m,t_n)$ is defined \cite{Burden2011} as the difference between current and previous approximations relative to the current approximation, each under a suitable norm $||\cdot||$
\begin{eqnarray}\label{eqn:app-rel-err}
E^{(k)} 
&=& \frac{||{\bf U}^{(k+1)}-{\bf U}^{(k)}||}{||{\bf U}^{(k+1)}||}\times 100\% 
\end{eqnarray}
closely approximates, $\epsilon^{(k)}$, the relative error for the $k^{\rm th}$ iteration assuming that the iterations are converging (that is, as long as  $\epsilon^{(k+1)}$ is much less than $\epsilon^{(k)}$). 
\end{definition}

The following conservative theorem, proven in \cite{Scarborough1966}, is helpful in clearly presenting the lower bound on the number of significant figures of our results.
\begin{theorem}\label{thm:significant-figures}
Approximation ${\bf U}^n$ at step $n$ with approximate relative error ${E}^{(k)}$ is correct to at least $N-1$ significant figures if 
\begin{equation}\label{eqn:sig-crit}
{E}^{(k)}<\epsilon_s(N)
\end{equation}

\end{theorem}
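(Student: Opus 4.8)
The plan is to convert the claim entirely into a statement about relative errors and then exploit the full decade of slack that the ``$N-1$'' buys us. First I would unwind Definition~\ref{def:sig-figs}: asserting that $\mathbf{U}^n$ is correct to at least $N-1$ significant figures is exactly the statement that its true relative error $\epsilon^{(k)}$ satisfies $\epsilon^{(k)} \le \epsilon_s(N-1)$, since $\epsilon_s$ is decreasing in its argument and $N-1$ being admissible forces the largest admissible integer to be no smaller. Because
\[ \epsilon_s(N-1) = \left(5\times 10^{-(N-1)}\right)\times 100\% = 10\,\epsilon_s(N), \]
the target inequality is $\epsilon^{(k)} \le 10\,\epsilon_s(N)$, or equivalently, in the logarithmic form (\ref{eqn:sig-bound}), $\log_{10}(5/\epsilon^{(k)}) > N-1$. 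Thus the entire burden is to turn the hypothesis $E^{(k)} < \epsilon_s(N)$, which controls only the \emph{approximate} relative error, into a bound on the \emph{true} relative error while surrendering at most one decade.

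Next I would bound $\epsilon^{(k)}$ in terms of $E^{(k)}$ using the triangle inequality together with the convergence hypothesis built into Definition~\ref{def:eqn:app-rel-err}. Writing $\mathbf{u}\equiv\mathbf{u}(\mathbf{x},t_n)$ for the limit of the iterates, I would split
\[ \| \mathbf{u} - \mathbf{U}^{(k)} \| \le \| \mathbf{U}^{(k+1)} - \mathbf{U}^{(k)} \| + \| \mathbf{u} - \mathbf{U}^{(k+1)} \|. \]
Dividing by $\|\mathbf{u}\|$ and comparing with $E^{(k)}$, which instead carries $\|\mathbf{U}^{(k+1)}\|$ in its denominator, reproduces $\epsilon^{(k)}$ on the left and, on the right, $E^{(k)}$ plus two corrections: one from replacing $\|\mathbf{U}^{(k+1)}\|$ by $\|\mathbf{u}\|$, and one proportional to $\epsilon^{(k+1)}$. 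The converging-iteration assumption $\epsilon^{(k+1)} \ll \epsilon^{(k)}$ recorded with Definition~\ref{def:eqn:app-rel-err} renders both corrections lower order, so that $\epsilon^{(k)} = E^{(k)}\bigl(1 + o(1)\bigr)$ as the iteration settles.

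Finally I would collect the pieces: from $E^{(k)} < \epsilon_s(N)$ and the estimate just derived, $\epsilon^{(k)}$ can exceed $\epsilon_s(N)$ only by the correction terms, which are governed by $\epsilon^{(k+1)}$ and the norm mismatch and so sit comfortably inside the ninefold headroom $\epsilon_s(N-1) - \epsilon_s(N) = 9\,\epsilon_s(N)$. Hence $\epsilon^{(k)} \le \epsilon_s(N-1)$, which by the first step is precisely correctness to at least $N-1$ significant figures. The main obstacle, and the reason the theorem is deliberately \emph{conservative}—sacrificing one significant figure rather than claiming the $N$ that a naive reading of $E^{(k)} < \epsilon_s(N)$ might suggest—is exactly this gap between the computable $E^{(k)}$ and the inaccessible $\epsilon^{(k)}$: the single decade of slack is the budget spent to absorb it, and the bound is honest only under the convergence hypothesis, without which $E^{(k)}$ need not track $\epsilon^{(k)}$ at all.
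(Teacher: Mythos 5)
The paper does not actually prove Theorem~\ref{thm:significant-figures}; it defers entirely to the citation of Scarborough, so there is no in-text argument to compare yours against line by line. That said, your reconstruction is the standard one and is essentially sound: reducing ``correct to at least $N-1$ significant figures'' to $\epsilon^{(k)}\le\epsilon_s(N-1)=10\,\epsilon_s(N)$ via Definition~\ref{def:sig-figs} is exactly right, and the triangle inequality
$\|\mathbf{u}-\mathbf{U}^{(k)}\|\le\|\mathbf{U}^{(k+1)}-\mathbf{U}^{(k)}\|+\|\mathbf{u}-\mathbf{U}^{(k+1)}\|$
is the correct bridge between the computable $E^{(k)}$ and the inaccessible $\epsilon^{(k)}$. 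The one soft spot is your final step: to conclude that the corrections ``sit comfortably inside the ninefold headroom'' you need the convergence hypothesis in quantitative form. Carrying your estimate through gives
$\epsilon^{(k)}\le E^{(k)}\bigl(1+\epsilon^{(k+1)}\bigr)+\epsilon^{(k+1)}$, and if you posit $\epsilon^{(k+1)}\le\theta\,\epsilon^{(k)}$ this solves to roughly $\epsilon^{(k)}\le E^{(k)}/(1-\theta)$, which lands inside $10\,\epsilon_s(N)$ only when $\theta\lesssim 0.9$. The paper's parenthetical ``$\epsilon^{(k+1)}$ is much less than $\epsilon^{(k)}$'' in Definition~\ref{def:eqn:app-rel-err} licenses this informally, but an airtight proof should state the contraction rate explicitly rather than appeal to $o(1)$, since at a fixed finite $k$ ``converging'' alone does not bound $\epsilon^{(k+1)}/\epsilon^{(k)}$ away from $1$. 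With that hypothesis made explicit, your argument is a faithful account of why the theorem is stated conservatively at $N-1$ rather than $N$.
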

Theorem \ref{thm:significant-figures} is conservatively true for the relative error $\epsilon$, often underestimating the number of significant figures found. The approximate relative error ${ E}^{(k)}$ (\ref{eqn:app-rel-err}), however, underestimates the relative error $\epsilon$ and may predict one significant digit more for low order methods. 

Combining theorem \ref{thm:significant-figures} with equation (\ref{eqn:sig-bound}), the number of significant figures has a lower bound 
\begin{equation}\label{eqn:sig-approx-bound} 
N \geq \left\lfloor \log_{10}\left(\frac{0.5}{{ E}^{(k)}}\right)\right\rfloor.
\end{equation}

\begin{table}
\caption{Verifying Accuracy in Time for Semi-Implicit Crank-Nicolson Method}
\label{tab:CN}      

\begin{tabular}{p{1cm}p{2.cm}p{1cm}p{1.5cm}p{2cm}p{1cm}p{1.5cm}}
\hline\noalign{\smallskip}
 & \multicolumn{2}{c}{\bf Test Equation} & \multicolumn{2}{c}{\bf Fisher-KPP Equation} \\
$\Delta t$ & Approximate Error$^a$ & Sig. Figs$^c$& \centering Order~of Accuracy$^b$ & Approximate Error$^a$ &  Sig. Figs$^c$& \centering Order~of Accuracy$^b$ \tabularnewline
\noalign{\smallskip}\svhline\noalign{\smallskip}
1		 &  3.8300e-05 &4 & 2 (2.0051)  & 4.1353e-05 &4& -1 (-0.5149)\\
$\frac12$ & 9.5413e-06 &4& 2 (2.0009)  & 5.9091e-05 &3& 0 (0.4989)\\
$\frac14$ & 2.3838e-06 &5& 2 (2.0003)  & 4.1818e-05 &4& 1 (0.7773)\\
$\frac18$ & 5.9584e-07 &5& 2 (2.0001)  & 2.4399e-05 &4 & 1 (0.8950)\\
$\frac1{16}$ & 1.4895e-07&6& 2 (2.0000)   & 1.3120e-05 &4 & 1 (0.9490)\\
$\frac1{32}$ & 3.7238e-08&7& 2 (2.0000)  & 6.7962e-06 &4& 1 (0.9749)\\
$\frac1{64}$ & 9.3096e-09&7& 2 (2.0001)  & 3.4578e-06 &5& 1 (0.9875)\\
$\frac1{128}$ & 2.3273e-09&8& 2 (1.9999)  &  1.7439e-06 &5 & 1 (0.9938)\\
\noalign{\smallskip}\hline\noalign{\smallskip}
\end{tabular}\\
$^a$ Approximate error under the max norm $||\cdot||_\infty$ for numerical solution ${\bf U}^{(k)}$ computed at $t_n=10$ compared to solution at next iteration ${\bf U}^{(k+1)}$ whose time step is cut in half.\\
$^b$ Order of accuracy is measured as the power of 2 dividing the error as the step size is divided by 2\\
$^c$ Minimum number of significant figures predicted by approximate error bounded by the significant error $\epsilon_s(N)$ as in equation (\ref{eqn:sig-crit})


\end{table}

\begin{example}
Table \ref{tab:CN} presents these measures of error to analyze the Crank-Nicolson method (\ref{eqn:CN}) for the linear Test equation (\ref{eqn:LinearTestIBVP}) and the semi-implicit version of the Crank-Nicolson method (\ref{eqn:CN_SI}) for the Fisher-KPP equation (\ref{eqn:FisherIBVP}). Column 1 tracks the step size $\dt$ as it is halved for improved approximations ${\bf U}^{(k)}$ at $t=10$. Columns 2 and 5 present the approximate errors in scientific notation for easy readability. Scientific notation helps read off the minimum number of significant figures ensured by Theorem \ref{thm:significant-figures}, as presented in columns 3 and 6. Notice how the errors in column 2 are divided by about 4 each iteration while those in column 5 are essentially divided by 2. This ratio of approximate relative errors demonstrates the orders of accuracy, $p$ as a power of 2 since the step sizes are divided by 2 each iteration.
\begin{equation}
\frac{\epsilon^{(k+1)}}{\epsilon^{(k)}} = \frac{C}{2^p},
\end{equation}
for some positive scalar $C$ which underestimates the integer $p$ when $C>1$ and overestimates $p$ when $C<1$. By rounding to mask the magnitude of $C$, the order $p$ can be computed as 
\begin{equation}\label{eqn:order-accuracy}
p = {\rm round}\left(\log_2\left(\frac{\epsilon^{(k)}}{\epsilon^{(k+1)}}\right)\right).
\end{equation}
Columns 4 and 7 present both rounded and unrounded measures of the order of accuracy for each method and problem. Thus, we have verified that Crank-Nicolson method (\ref{eqn:CN}) on a linear problem is second order accurate in time, whereas the semi-implicit version of the Crank-Nicolson method (\ref{eqn:CN_SI}) for the nonlinear Fisher-KPP equation (\ref{eqn:FisherIBVP}) is only first order in time.

For comparison, Table \ref{tab:ode23s} presents these same measures for the Rosenbrock method built into MATLAB as ode23s. See example program {\tt Method\_Accuracy \_Verification.m} in Appendix \ref{App:AccuracyProof} for how to fix a constant step size in such an adaptive solver by setting the initial step and max step to be $\dt$ with a high tolerance to keep the adaptive method from altering the step size.
\end{example}

\begin{exercise}
Implement the Improved Euler Crank-Nicolson method (\ref{eqn:CN_IE}) and verify that the error in time is $O\left(\dt^2\right)$ on both the Test equation (\ref{eqn:LinearTestIBVP}) and Fisher-KPP equation (\ref{eqn:FisherIBVP}) using a table similar to Table \ref{tab:CN}.
\end{exercise}


\begin{table}
\caption{Verifying Accuracy in Time for ode23s Solver in MATLAB}
\label{tab:ode23s}      

\begin{tabular}{p{1cm}p{2.cm}p{1cm}p{2cm}p{2cm}p{1cm}p{2cm}}
\hline\noalign{\smallskip}
 & \multicolumn{2}{c}{\bf Test Equation} & \multicolumn{2}{c}{\bf Fisher-KPP Equation} \\
\centering $\Delta t$ & \centering Approximate Error$^a$ & \centering Sig. Figs$^c$& \centering Order~of Accuracy$^b$ & \centering Approximate Error$^a$ & \centering  Sig. Figs$^c$& \centering Order~of Accuracy$^b$ \tabularnewline
1 &1.8829e-05&4&   2 (2.00863)&	   7.4556e-05&3&	 2 (1.90005)\\
$\frac12$  &4.6791e-06&5&	 2 (2.00402)&	   1.9976e-05&4&	 2 (1.98028)\\	 
$\frac14$  &1.1665e-06&5&	 2 (2.00198)&	   5.0628e-06&4&	 2 (1.99722)\\	 
$\frac18$  &2.9123e-07&6&	 2 (2.00074)&	   1.2681e-06&5&	 2 (2.00060)\\	 
$\frac1{16}$ &7.2771e-08&6&	 2 (2.00054)&	   3.169e-07& 6&  2 (2.00025)\\	 
$\frac1{32}$&1.8186e-08	&7&  2 (2.00027)&	   7.9211e-08&6&	 2 (1.99900)\\	 
$\frac1{64}$ &4.5456e-09&8&	 2 (1.99801)&	   1.9817e-08&7&	 2 (2.00168)\\	 
$\frac1{128}$&1.138e-09& 8&   2 (1.99745)&	   4.9484e-09&8&	 2 (2.00011)\\	
\noalign{\smallskip}\hline\noalign{\smallskip}
\end{tabular}\\
$^a$ Approximate error under the max norm $||\cdot||_\infty$ for numerical solution ${\bf U}^{(k)}$ computed at $t_n=10$ compared to solution at next iteration ${\bf U}^{(k+1)}$ whose time step is cut in half.\\
$^b$ Order of accuracy is measured as the power of 2 dividing the error as the step size is divided by 2\\
$^c$ Minimum number of significant figures predicted by approximate error bounded by the significant error $\epsilon_s(N)$ as in equation (\ref{eqn:sig-crit})



\end{table}

\subsection{Convergence}\label{sub:convergence}
Numerical methods provide dependable approximations $U_{m}^{n}$ of
the exact solution $u\left(x_{m},t_{n}\right)$ only if the approximations
converge to the exact solution, $U_{m}^{n}\to u\left(x_{m},t_{n}\right)$
as the step sizes diminish, $\dx,\dt\to0$. Convergence of a numerical
method relies on both the consistency of the approximate equation
to the original equation as well as the stability of the solution
constructed by the algorithm. Since consistency is determined by construction,
we need only analyze the stability of consistently constructed schemes
to determine their convergence. This convergence through stability is proven generally by the Lax-Richtmeyer Theorem \cite{Thomas1995}, but is more specifically defined for
two-level numerical methods (\ref{eqn:two-level_method}) in the Lax Equivalence Theorem (\ref{thm:Lax-Equivalence-Theorem}).

\begin{definition}\label{def:well-posed}
A problem is {\it well-posed} if there exists a unique solution which depends continuously on the conditions.
\end{definition}
Discretizing an initial-boundary-value problem (IBVP)  into an initial-value problem (IVP) as an ODE system ensures the boundary conditions are well developed for the problem, but the initial conditions must also agree at the boundary for the problem to be well-posed. Further, the slope function of the ODE system needs to be infinitely differential, like the Fisher-KPP equation (\ref{eqn:FisherIBVP}), or at least Lipshitz-continuous, so that Picard's uniqueness and existence theorem via Picard iterations \cite{Burden2011} applies to ensure that the problem is well-posed \cite{Thomas1995}. Theorem \ref{thm:Lax-Equivalence-Theorem}, proved in \cite{Thomas1995} ties this altogether to ensure convergence of the numerical solution to the true solution.

\begin{theorem}[Lax Equivalence Theorem] A consistent, two-level difference scheme
(\ref{eqn:two-level_method}) for a well-posed linear IVP is convergent if and only if it is stable. \label{thm:Lax-Equivalence-Theorem}
\end{theorem}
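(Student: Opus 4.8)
The plan is to prove the two implications separately, organized around the error vector $\mathbf{e}^n = \mathbf{u}^n - \mathbf{U}^n$, where $\mathbf{u}^n$ collects the exact values $u(x_m,t_n)$ and $\mathbf{U}^n$ is the numerical approximation produced by the scheme (\ref{eqn:two-level_method}). The engine of the whole argument is a single error-propagation identity, so I would derive that first. Because the problem is linear, the exact solution satisfies the scheme up to its local truncation error $\boldsymbol{\tau}^n$, that is $\mathbf{u}^{n+1} = M\mathbf{u}^n + \mathbf{N} + \dt\,\boldsymbol{\tau}^n$, while $\mathbf{U}^{n+1} = M\mathbf{U}^n + \mathbf{N}$ holds exactly. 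Subtracting gives the recursion $\mathbf{e}^{n+1} = M\mathbf{e}^n + \dt\,\boldsymbol{\tau}^n$, which I would unroll into
\begin{equation}
\mathbf{e}^n = M^n \mathbf{e}^0 + \dt \sum_{j=0}^{n-1} M^{\,n-1-j}\boldsymbol{\tau}^j .
\end{equation}
In this language, \emph{consistency} is precisely the statement that $\|\boldsymbol{\tau}^j\| \to 0$ as $\dx,\dt \to 0$, and \emph{stability} is precisely the statement that the powers of $M$ are uniformly bounded, $\|M^n\| \leq C$ for all $n$ with $n\dt \leq T$.

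For the direction stability $\Rightarrow$ convergence I would take norms in the unrolled identity and apply the triangle inequality together with the stability bound. Assuming the initial error $\mathbf{e}^0$ tends to zero and writing $n\dt \leq T$, this yields
\begin{equation}
\|\mathbf{e}^n\| \leq C\,\|\mathbf{e}^0\| + C\,T \max_{0\leq j < n}\|\boldsymbol{\tau}^j\| ,
\end{equation}
and the right-hand side vanishes as the step sizes shrink, by consistency. This is the elementary half, and it is exactly the compounding-of-error estimate anticipated in the Introduction: stability controls how the accumulated truncation error is amplified across $n$ steps.

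The converse, convergence $\Rightarrow$ stability, is where the real work lies, and I expect it to be the main obstacle, because it cannot be settled by a direct estimate on the identity above—it is essentially a soft functional-analytic argument. The natural route is the contrapositive: assume the scheme is not stable, so $\|M^n\|$ is unbounded along some sequence of refinements, and then construct initial data for which the numerical solution fails to converge, contradicting convergence. The clean way to package this is the Banach--Steinhaus (uniform boundedness) principle: the solution operators advancing data to a fixed time $T$ form a family of bounded linear maps on the underlying Banach space, and convergence for \emph{every} admissible initial datum forces this family to be pointwise bounded, hence uniformly bounded, which is exactly stability. Carrying this out rigorously requires fixing the correct Banach space, indexing the operator family by the refinement parameter, and invoking completeness—the one step that genuinely reaches beyond the linear-algebra toolkit used in the rest of the chapter. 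I would therefore either cite Thomas \cite{Thomas1995} for this half, as the surrounding text already does, or, for an undergraduate audience, concentrate on the practically decisive direction stability $\Rightarrow$ convergence and simply record that the converse certifies we are not discarding some stable scheme that nonetheless fails to converge.
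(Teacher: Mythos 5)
Your outline is sound, but note that the paper itself offers no proof of Theorem \ref{thm:Lax-Equivalence-Theorem}: it defers entirely to Thomas \cite{Thomas1995}, so there is nothing in the text to compare against line by line. What you have written is the standard argument one finds in that reference. The error-propagation identity $\mathbf{e}^{n+1} = M\mathbf{e}^n + \dt\,\boldsymbol{\tau}^n$ and its unrolled form are exactly right for a linear two-level scheme (linearity is what makes $\mathbf{N}$ cancel cleanly, which is why the theorem is stated for linear IVPs), and your estimate for the direction stability $\Rightarrow$ convergence is the correct and elementary half; the only small point worth making explicit is that consistency must give $\max_{0\leq j<n}\|\boldsymbol{\tau}^j\|\to 0$ \emph{uniformly} over $j\dt\leq T$, which holds for sufficiently smooth exact solutions, and that the paper's stability bound $Ke^{\beta\dt}$ in Definition \ref{def:stable} should be read as controlling $|||M^n|||$ uniformly for $n\dt\leq T$, i.e.\ your constant $C=Ke^{\beta T}$. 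Your identification of the converse as the genuinely hard half, requiring the uniform boundedness principle on an appropriate Banach space of grid functions indexed by the refinement parameter, is also correct, and your instinct to either cite \cite{Thomas1995} for that direction or present only the forward implication to an undergraduate audience matches what the chapter itself does. In short, your proposal fills in the argument the paper omits rather than diverging from it.
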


\subsection{Stability}\label{sub:stability} 
The beauty of Theorem \ref{thm:Lax-Equivalence-Theorem} (Lax Equivalence Theorem ) is that once we have a consistent numerical method, we can explore the bounds on stability to ensure convergence of the numerical solution. We begin with a few definitions and examples to lead us to von Neumann stability analysis, named after mathematician John von Neumann (1903-1957).

Taken from the word {\it eigenwerte}, meaning one's own values in German, the {\it eigenvalues} of a matrix define how a matrix operates in a given situation. 
\begin{definition}\label{def:eig}
For a $k\times k$ matrix $M$, a scalar $\lambda$ is an eigenvalue of $M$ with corresponding $k\times 1$ eigenvector ${\bf v}\neq {\bf 0}$ if 
\begin{equation}
M{\bf v} = \lambda {\bf v}.
\end{equation}
\end{definition}

\begin{figure}[t]
\centerline{
\includegraphics[scale=.65]{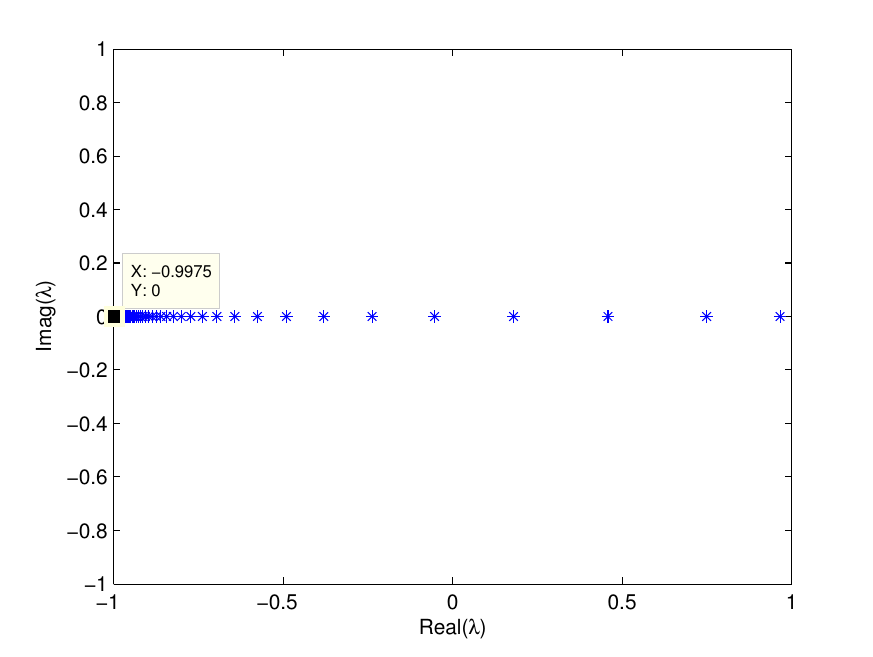} 
}
\caption{Plot of real and imaginary components of all eigenvalues of method matrix $M$ for semi-implicit Crank-Nicolson method for Fisher-KPP equation (\ref{eqn:FisherIBVP}) using the default parameter values $a=0, b=1, L=10, \delta=1, \dx=0.05, \rho=1, {\rm degree}=2, c=\frac{1}{3}$ and initial condition as given in line 16 of {\tt PDE\_Analysis\_Setup.m} in Appendix \ref{App:Setup}. }
\label{fig:f5eig}
\end{figure}

Lines 22-31 of the demonstration code {\tt PDE\_Solution.m}, found in Appendix \ref{App:Solutions}, compute several measures helpful in assessing stability, including the graph of the eigenvalues of the method matrix for a two-level method (\ref{eqn:two-level_method}) on the complex plane. The spectrum, range of eigenvalues, of the default method matrix is demonstrated in Figure \ref{fig:f5eig}, while the code also reports the range in the step size ratio $\frac{}{}$, range in the real parts of the eigenvalues, and computes the spectral radius.
\begin{definition}
The spectral radius of a matrix, $\mu(M)$ is the maximum magnitude of all eigenvalues of $M$
\begin{equation}
\mu(M)=\max_{i}|\lambda_{i}|.
\end{equation} 
\end{definition}

The norm of a vector is a well-defined measure of its size in terms of a specified metric, of which the Euclidean distance (notated $||\cdot||_2$), the maximum absolute value ($||\cdot||_\infty$), and the absolute sum ($||\cdot||_1$) are the most popular. See \cite{HornJohnson2012} for further details. These measures of a vector's size can be extended to matrices. 
\begin{definition}\label{defn:MatrixNorm}
For any norm $||\cdot||$, the corresponding matrix norm $|||\cdot|||$ is defined by
\begin{equation}
|||M|||=\max_{\bf x}\frac{||M{\bf x}}{\bf x}.
\end{equation}
\end{definition}
A useful connection between norms and eigenvalues is the following theorem \cite{HornJohnson2012}
\begin{theorem} \label{thm:RadiusBound}
For any matrix norm $|||\cdot|||$ and square matrix $M$, $\mu(M)\leq |||M|||$.
\end{theorem}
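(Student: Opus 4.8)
The plan is to leverage directly the two relevant definitions already in hand: the eigenvalue relation $M{\bf v}=\lambda{\bf v}$ from Definition \ref{def:eig} and the induced matrix norm $|||M|||=\max_{\bf x}\frac{||M{\bf x}||}{||{\bf x}||}$ from Definition \ref{defn:MatrixNorm}. The central observation is that every eigenvector is a particular admissible choice of the vector ${\bf x}$ appearing in the maximum defining the matrix norm, so each eigenvalue's magnitude is automatically bounded by $|||M|||$. This reduces the whole argument to a one-line consequence of homogeneity of the vector norm followed by a maximization.

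First I would fix an arbitrary eigenvalue $\lambda$ of $M$ together with a corresponding nonzero eigenvector ${\bf v}$, so that $M{\bf v}=\lambda{\bf v}$. Applying the vector norm to both sides and using the homogeneity property $||\lambda{\bf v}||=|\lambda|\,||{\bf v}||$ gives $||M{\bf v}||=|\lambda|\,||{\bf v}||$. Since ${\bf v}\neq{\bf 0}$ we may divide by $||{\bf v}||>0$ to obtain
\begin{equation}
|\lambda|=\frac{||M{\bf v}||}{||{\bf v}||}.
\end{equation}
Because the matrix norm is the maximum of $\frac{||M{\bf x}||}{||{\bf x}||}$ taken over all nonzero ${\bf x}$, the specific choice ${\bf x}={\bf v}$ can only produce a value no larger than this maximum, yielding $|\lambda|\leq|||M|||$.

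Finally, since this inequality holds for an arbitrarily chosen eigenvalue $\lambda$, it holds in particular for the eigenvalue of largest magnitude; taking the maximum over all eigenvalues then gives $\mu(M)=\max_i|\lambda_i|\leq|||M|||$, which is exactly the claimed bound.

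The only subtle point, and the step I would be most careful about, is that eigenvalues may be complex even when $M$ is real, so the homogeneity step $||\lambda{\bf v}||=|\lambda|\,||{\bf v}||$ must be interpreted over the complex vector space with $|\lambda|$ denoting the complex modulus. This is precisely the form in which the standard vector norms $||\cdot||_1,||\cdot||_2,||\cdot||_\infty$ extend, so no genuine obstacle arises, but it is worth stating explicitly to keep the argument rigorous.
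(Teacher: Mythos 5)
Your proof is correct, and it is a genuinely different argument from the one in the paper. You work directly from Definition \ref{defn:MatrixNorm} of the induced matrix norm: an eigenvector is an admissible competitor in the maximum $\max_{\bf x}\frac{||M{\bf x}||}{||{\bf x}||}$, and homogeneity of the vector norm immediately gives $|\lambda|=\frac{||M{\bf v}||}{||{\bf v}||}\leq|||M|||$. The paper instead builds a square matrix $X$ whose columns are all copies of the eigenvector, observes $MX=\lambda X$, and applies submultiplicativity, $|\lambda|\,|||X|||=|||MX|||\leq|||M|||\,|||X|||$, before dividing by $|||X|||\neq 0$. The trade-off is one of generality versus economy: your argument is shorter and uses only the variational characterization of the induced norm, so it is arguably the natural proof given the definition the paper actually states; the paper's matrix trick avoids ever applying the norm to a vector, so it proves the bound for \emph{any} submultiplicative matrix norm, including ones (such as the Frobenius norm) that are not induced by a vector norm --- which is what the phrase ``for any matrix norm'' in the theorem statement is implicitly reaching for. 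Your closing remark about complex eigenvalues is well taken and applies equally to both proofs; it is the one point where each argument silently assumes the norm extends to the complex vector space.
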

\begin{proof}
Consider an eigenvalue $\lambda$ of matrix $M$ corresponding to eigenvector ${\bf x}$ whose magnitude equals the spectral radius, $|\lambda|=\mu(M)$. Form a square matrix $X$ whose columns each equal the eigenvector ${\bf x}$. Note that by Definition \ref{def:eig}, $MX=\lambda X$ and $|||X|||\neq 0$ since ${\bf x}\neq{\bf 0}$.
\begin{eqnarray}
|\lambda | |||X||| &=& ||\lambda X||\\ \nonumber
&=& ||M X||\\ \nonumber
&\leq & |||M||| |||X|||
\end{eqnarray}
Therefore, $|\lambda|=\mu(M)\leq |||M|||$.
\end{proof}
Theorem \ref{thm:RadiusBound} can be extended to an equality in Theorem \ref{thm:RadiusNorm} (proven in \cite{HornJohnson2012}),
\begin{theorem}\label{thm:RadiusNorm}
$\mu(M)= \lim_{k\to\infty} |||M^k|||^\frac{1}{k}$.
\end{theorem}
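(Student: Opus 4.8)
The plan is to establish the identity by squeezing the limit between two matching bounds, namely $\mu(M)\le\liminf_{k\to\infty}|||M^k|||^{1/k}$ and $\limsup_{k\to\infty}|||M^k|||^{1/k}\le\mu(M)$. The first preliminary fact I would record is that if $\lambda$ is an eigenvalue of $M$ with eigenvector ${\bf v}$, then $M^k{\bf v}=\lambda^k{\bf v}$, so the eigenvalues of $M^k$ are exactly the $k$-th powers of the eigenvalues of $M$. Taking maximum magnitudes gives the key relation $\mu(M^k)=\mu(M)^k$, which is what lets me translate statements about $M$ into statements about its powers.

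For the lower bound, I would apply Theorem \ref{thm:RadiusBound} with $M^k$ in place of $M$. That theorem gives $\mu(M^k)\le|||M^k|||$, and combining with $\mu(M^k)=\mu(M)^k$ yields $\mu(M)^k\le|||M^k|||$. Taking $k$-th roots gives $\mu(M)\le|||M^k|||^{1/k}$ for every $k$, and hence $\mu(M)\le\liminf_{k\to\infty}|||M^k|||^{1/k}$. This direction is immediate from results already in hand.

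The harder direction is the upper bound, and this is where I expect the main obstacle to lie. The strategy is a scaling argument: fix $\epsilon>0$ and set $M_\epsilon=\frac{1}{\mu(M)+\epsilon}M$, so that $\mu(M_\epsilon)=\frac{\mu(M)}{\mu(M)+\epsilon}<1$. The crucial lemma I would need is that any square matrix with spectral radius strictly less than $1$ is power-convergent, meaning $M_\epsilon^k\to{\bf 0}$ entrywise as $k\to\infty$. Granting this lemma, the sequence $|||M_\epsilon^k|||$ is bounded by some constant $C$, and undoing the scaling gives $|||M^k|||\le C\,(\mu(M)+\epsilon)^k$. Taking $k$-th roots yields $|||M^k|||^{1/k}\le C^{1/k}(\mu(M)+\epsilon)$; since $C^{1/k}\to 1$, I obtain $\limsup_{k\to\infty}|||M^k|||^{1/k}\le\mu(M)+\epsilon$, and letting $\epsilon\to 0^+$ closes the bound.

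The real work is therefore isolated in the power-convergence lemma, whose cleanest proof passes through the Jordan canonical form: writing $M_\epsilon=PJP^{-1}$ reduces the problem to showing that each Jordan block with eigenvalue satisfying $|\lambda|<1$ has powers tending to zero, which holds because the entries of a block's $k$-th power are polynomials in $k$ multiplied by powers $\lambda^{k-j}$, and geometric decay dominates polynomial growth. Once both inequalities are in place, $\liminf$ and $\limsup$ coincide, so the limit exists and equals $\mu(M)$, completing the proof.
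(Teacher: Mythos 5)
Your proof is correct. The paper does not actually prove this theorem itself---it defers to Horn and Johnson \cite{HornJohnson2012}---and your two-sided squeeze (the lower bound from applying Theorem \ref{thm:RadiusBound} to $M^k$ together with $\mu(M)^k\leq\mu(M^k)$, and the upper bound via the rescaled matrix $M_\epsilon$ whose powers tend to zero by the Jordan-form argument) is precisely the standard proof given in that reference, so there is nothing to compare beyond noting that your only overstatement (that the eigenvalues of $M^k$ are \emph{exactly} the $k$-th powers of those of $M$) is harmless since you use only the inclusion your eigenvector computation establishes.
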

This offers a useful estimate of the matrix norm by the spectral radius, specifically when the matrix is powered up in solving a numerical method.

Now, we can apply these definitions to the stability of a numerical method. An algorithm is stable if small changes in the initial data produce only small changes in the final results \cite{Burden2011}, that is, the errors do not ``grow too fast" as quantified in Definition \ref{def:stable}.
\begin{definition}\label{def:stable}
A two-level difference method (\ref{eqn:two-level_method}) is said to be stable with respect to the norm $||\cdot||$ if there exist positive max step sizes $\dt_{0},\ and\ \dx_{0},$ and non-negative constants $K$ and $\beta$ so that 
\[
||{\bf U}^{n+1}||\leq Ke^{\beta\triangle t}||U^{0}||,
\]
for $0\leq t,\ 0<\triangle x\leq\triangle x_{0}\ and\ 0<\triangle t\leq\triangle t_{0}$.
\label{defn:Burden Faires Stability}
\end{definition}

The {\it von Neumann criterion} for stability (\ref{VN criterion}) allows for stable solution to an exact solution which is not growing (using $C=0$ for the tight von Neumann criterion) or at most exponentially growing (using some $C>0$) by bounding the spectral radius of the method matrix,
\begin{equation}
\mu(M)\leq1+C\triangle t,\label{VN criterion}
\end{equation}
for some $C\geq0$.

Using the properties of norms and an estimation using Theorem \ref{thm:RadiusNorm}, we can approximately bound the size of the numerical solution under the von Neumann criterion as
\begin{eqnarray}
||{\bf U}^{n+1}|| &=& ||M^{n+1}U^{0}||\\ \nonumber
&\leq & |||M^{n+1}|||\ ||U^{0}||\\ \nonumber
&\approx & \mu(M)^{n+1}\ ||U^{0}||\\ \nonumber
&\leq & \left(1+C\dt \right)^{n+1}\ ||U^{0}||\\ \nonumber
&=& \left(1+(n+1)C\dt +...\right)\ ||U^{0}||\\ \nonumber
&\leq & e^{(n+1)C\dt}\ ||U^{0}||\\ \nonumber
&=& Ke^{\beta\dt}\ ||U^{0}||
\end{eqnarray}
for $K=1,\beta=(n+1)C$, which makes the von Neumann criterion sufficient for stability of the solution in approximation for a general method matrix. When the method matrix is symmetric, which occurs for many discretized PDEs including the Test equation with Forward Euler, Backward Euler, and Crank-Nicolson methods, the spectral radius equals the $|||\cdot|||_2$ of the matrix. Then, the von Neumann criterion (\ref{VN criterion}) provides a precise necessary and sufficient condition for stability \cite{Thomas1995}. 


If the eigenvalues are easily calculated, they provide a simple means for predicting the stability and behavior of the solution. {\it Von Neumann analysis}, estimation of the eigenvalues from the PDE itself, provides a way to extract information about the eigenvalues, if not the exact eigenvalues themselves.


For a two-level numerical scheme (\ref{eqn:two-level_method}), the
eigenvalues of the combined transformation matrix indicate the stability
of the solution. 
Seeking a solution to the linear difference scheme by separation of variables, as is used for linear PDEs, we can show that the discrete error growth factors
are the eigenvalues of the method matrix $M$.
Consider a two-level difference scheme (\ref{eqn:two-level_method}) for a linear parabolic PDE so that ${\bf R}=0$, 
then the eigenvalues can be defined by the constant ratio \cite{Thomas1995} 
\begin{equation}
\frac{U_{m}^{n+1}}{U_{m}^{n}}=\frac{T^{n+1}}{T^{n}}=\lambda_{m},\ \mbox{where }U_{m}^{n}=X_{m}T^{n}.\label{Growth Factor-Eigenvalue relation}
\end{equation}
The error $\epsilon_{m}^{n}=u\left(x_{m},t_{n}\right)-U_{m}^{n}$
satisfies the same equation as the approximate solution $U_{m}^{n}$,
so the eigenvalues also define the ratio of errors in time
called the error growth (or amplification) factor \cite{Thomas1995}.
Further, the error can be represented in Fourier form as $\epsilon_{m}^{n}=\hat{\epsilon}e^{\alpha n\dt}e^{im\beta\dx}$
where $\hat{\mbox{\ensuremath{\epsilon}}}$ is a Fourier coefficient,
$\alpha$ is the growth/decay constant, $i=\sqrt{-1}$, and $\beta$ is the wave
number. Under this assumptions, the eigenvalues are equivalent to the error growth factors of the numerical method,
\[
\lambda_{k}=\frac{U_{m}^{n+1}}{U_{m}^{n}}=\frac{\epsilon_{m}^{n+1}}{\epsilon_{m}^{n}}=\frac{\hat{\epsilon}e^{\alpha\left(n+1\right)\dt}e^{im\beta\dx}}{\hat{\epsilon}e^{\alpha n\dt}e^{im\beta\dx}}=e^{\alpha\dt}.
\]
We can use this equivalency to finding bounds on 
the eigenvalues of a numerical scheme by plugging the representative growth factor $e^{\alpha\dt}$ into the method discretization called von Neumann stability
analysis (also known as Fourier stability analysis) \cite{vonNeumann1950, Thomas1995}.
The von Neumann criterion (\ref{VN criterion}) ensures that the matrix stays bounded as it is powered up. 
If possible, $C$ is set to 0, called the tight von Neumann criterion for simplified bounds on the step sizes.
As another consequence of this equivalence, analyzing the spectrum of the transformation matrix also reveals patterns in the orientation, spread, and balance of the growth of errors for various wave modes.

Before we dive into the stability analysis, it is helpful to review some identities for reducing the error growth factors:
\begin{eqnarray}\label{eqn:identities}
\frac{e^{ix}+ e^{-ix}}{2} = \cos(x),\ \frac{e^{ix}- e^{-ix}}{2} =i\sin(x),\\ \nonumber
\frac{1- \cos(x)}{2} = \sin^2\left(\frac{x}{2}\right),\frac{1+ \cos(x)}{2} =\cos^2\left(\frac{x}{2}\right).
\end{eqnarray}

\begin{example}\label{ex:FWD_Stability}
Use the von Neumann stability analysis to determine conditions on $\dt,\dx$ to ensure stability of the Crank-Nicolson method (\ref{eqn:CN}) for the Test equation (\ref{eqn:LinearTestIBVP}).

For von Neumann stability analysis, we replace each solution term $U_m^n$ in the method with the the representative error $\epsilon_{m}^{n}=\hat{\epsilon}e^{\alpha n\dt}e^{im\beta\dx}$,
\begin{eqnarray}
U_{m}^{n+1}&=& rU_{m-1}^{n}+\left(1-2r\right)U_{m}^{n} +rU_{m+1}^{n},\\ \nonumber
\hat{\epsilon}e^{\alpha n\dt+\alpha\dt}e^{im\beta\dx} &=& r\hat{\epsilon}e^{\alpha n\dt}e^{im\beta\dx-i\beta\dx} + \left(1-2r\right)\hat{\epsilon}e^{\alpha n\dt}e^{im\beta\dx}\\ \nonumber
&&+ r\hat{\epsilon}e^{\alpha n\dt}e^{im\beta\dx+i\beta\dx},
\end{eqnarray}
where $r = \frac{\delta \dt}{\dx^2}$. Dividing through by the common $\epsilon_{m}^{n}$ term we can solve for the error growth factor
\begin{eqnarray}
e^{\alpha \dt} &=& 1-2r+2r\left(\frac{e^{i\beta\dx}+ e^{-i\beta\dx}}{2}\right),\\ \nonumber
&=& 1-4r\left(\frac{1-\cos(\beta\dx)}{2}\right),\\ \nonumber
&=& 1-4r\sin^{2}\left(\frac{\beta\dx}{2}\right),
\end{eqnarray}
reduced using the identities in (\ref{eqn:identities}). Using a tight ($C=0$) von Neumann criterion (\ref{VN criterion}), we bound $|e^{\alpha\dt}|\leq 1$ with the error growth factor in place of the spectral radius. Since the error growth factor is real, the bound is ensured in the two components, $e^{\alpha\dt}\leq 1$ which holds trivially and $e^{\alpha\dt}\geq -1$ which is true at the extremum as long as $r\leq\frac12$. Thus, as long as $dt\leq\frac{\dx^2}{2\delta}$, the Forward Euler method for the Test equation is stable in the sense of the tight von Neumann criterion which ensures diminishing errors at each step.
\end{example}

The balancing of explicit and implicit components in the Crank-Nicolson method create a much less restrictive stability condition.
\begin{exercise}\label{ex:CN_Stability}
Use the von Neumann stability analysis to determine conditions on $\dt,\dx$ to ensure stability of the Crank-Nicolson method (\ref{eqn:CN}) for the Test equation (\ref{eqn:LinearTestIBVP}).

{\it Hint: verify that 
\begin{equation} \label{eqn:growthfactor_CNtest}
e^{\alpha\dt} =\frac{1-2r\sin^{2}\left(\frac{\beta\dx}{2}\right)}{1+2r\sin^{2}\left(\frac{\beta\dx}{2}\right)},
\end{equation}
and show that both bounds are trivially true so that Crank-Nicolson method is unconditionally stable.}
\end{exercise}

The default method in {\tt PDE\_Solution.m} and demonstrated in Figure \ref{fig:f5eig} is the semi-implicit Crank-Nicolson method (\ref{eqn:CN_SI}) for the Fisher-KPP equation (\ref{eqn:FisherIBVP}). If you set $\rho=0$ in {\tt PDE\_Analysis\_Setup.m}, however, Crank-Nicolson method for the Test equation (\ref{eqn:LinearTestIBVP}) is analyzed instead. 

The two-level matrix form of the semi-implicit Crank-Nicolson method is
\begin{eqnarray}\label{eqn:CN_SI_matrix}
{\bf U}^{n+1} &=& M {\bf U}^n + {\bf N},\\ \nonumber
M &=& \left(I - \frac{\dt}{2}D \right)^{-1}\left( I + \frac{\dt}{2}D +\rho\dt \left( 1 - {\bf U}^n \right) \right),\\ \nonumber
{\bf N} &=& \dt\left(I - \frac{\dt}{2}D \right)^{-1}{\bf B}.
\end{eqnarray}
Notice in Figure \ref{fig:f5eig} that all of the eigenvalues are real and they are all bounded between -1 and 1. Such a bound, $|\lambda|<1$, ensures stability of the method based upon the default choice of $\dt,\dx$ step sizes.

\begin{example} \label{ex:CN_SI_Stability}
Use the von Neumann stability analysis to determine conditions on $\dt,\dx$ to ensure stability of the semi-implicit Crank-Nicolson method (\ref{eqn:CN_SI}) for the Fisher-KPP equation (\ref{eqn:FisherIBVP}).

Now we replace each solution term $U_m^n$ in the semi-implicit Crank-Nicolson method (\ref{eqn:CN_SI}) for the Fisher-KPP equation (\ref{eqn:FisherIBVP})
$$
-\frac{r}{2}U_{m-1}^{n+1}+\left(1-r\right)U_{m}^{n+1}-\frac{r}{2}U_{m+1}^{n+1}= \frac{r}{2}U_{m-1}^{n}+\left(1-r+\rho (1-U_{m}^{n}) \right)U_{m}^{n} +\frac{r}{2}U_{m+1}^{n}
$$
with the representative error $\epsilon_{m}^{n}=\hat{\epsilon}e^{\alpha n\dt}e^{im\beta\dx}$ where again $r = \frac{\delta \dt}{\dx^2}$. Again dividing through by the common $\epsilon_{m}^{n}$ term we can solve for the error growth factor 
$$
e^{\alpha\dt}=\frac{1-2r\sin^{2}\left(\frac{\beta\dx}{2}\right) + \dt\rho(1-\tilde{U})}{1+2r\sin^{2}\left(\frac{\beta\dx}{2}\right)}
$$
where constant $\tilde{U}$ represents the extreme values of $U_{m}^{n}$. Due to the equilibrium points $\bar{u}=0,1$ to be analyzed in section \ref{sub:steadystate}, the bound $0\leq U_{m}^{n}\leq 1$ holds as long as the initial condition is similarly bounded $0\leq U_{m}^{0}\leq 1$.
Due to the potentially positive term $\dt \rho (1-\tilde{U})$, the tight ($C=0$) von Neumann criterion fails, but the general von Neumann criterion (\ref{VN criterion}), $|e^{\alpha\dt}|\leq 1+C\dt$, does hold for $C\geq \rho$. With this assumption, $e^{\alpha\dt}\geq -1-C\dt$ is trivially true and $e^{\alpha\dt}\leq 1+C\dt$ results in
$$
\left(\rho(1-\tilde{U})-C\right)\dx^2 -4\delta \sin^{2}\left(\frac{\beta\dx}{2}\right) \leq 2C\delta\dt \sin^{2}\left(\frac{\beta\dx}{2}\right) 
$$
which is satisfied at all extrema as long as $C\geq \rho$ since $\left(\rho(1-\tilde{U})-C\right)\leq 0$. Thus, the semi-implicit Crank-Nicolson method is unconditionally stable in the sense of the general von Neumann criterion, which bounds the growth of error less than an exponential. This stability is not as strong as that of the Crank-Nicolson method for the Test equation but it provides useful management of the error.
\end{example}

\subsection{Oscillatory Behavior}\label{sub:oscillations}

Oscillatory behavior has been exhaustively studied for ODEs \cite{CGraham2016, Gao2015} with much numerical focus on researching ways to dampen oscillations in case they emerge \cite{Britz2003, Osterby2003}. For those wishing to keep their methods oscillation-free, Theorem \ref{thm:nonnegeig} provides sufficiency of non-oscillatory behavior through the non-negative eigenvalue condition (proven, for example, in \cite{Thomas1995}).
\begin{theorem}[Non-negative Eigenvalue Condition] \label{thm:nonnegeig}
A two-level difference scheme (\ref{eqn:two-level_method}) is free of numerical oscillations if all the eigenvalues $\lambda_i$ of the method matrix $M$ are non-negative.
\end{theorem}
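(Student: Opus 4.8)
The plan is to link the sign of each eigenvalue to the step-to-step sign pattern of the corresponding solution mode, exploiting the growth-factor interpretation of the eigenvalues recorded in (\ref{Growth Factor-Eigenvalue relation}). The starting observation is that for a two-level scheme (\ref{eqn:two-level_method}) the error propagates homogeneously: writing the error vector ${\bf e}^n$ with components $\epsilon_m^n = u(x_m,t_n)-U_m^n$, the constant resultant vector ${\bf N}$ cancels when the true and approximate updates are differenced, leaving ${\bf e}^{n+1}=M{\bf e}^n$ and hence ${\bf e}^n=M^n{\bf e}^0$. The entire oscillatory character of the method is thus encoded in the powers of $M$.

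First I would diagonalize $M$. For the centered-difference discretizations used throughout this chapter the method matrix is diagonalizable with real eigenpairs $(\lambda_k,{\bf v}_k)$, so the initial error expands in the eigenbasis as ${\bf e}^0=\sum_k c_k{\bf v}_k$. Applying $M^n$ term by term gives the modal decomposition ${\bf e}^n=\sum_k c_k\lambda_k^{\,n}{\bf v}_k$, which isolates the temporal evolution of each mode into the scalar factor $\lambda_k^{\,n}$ --- precisely the error growth factor $e^{\alpha\dt}$ of the von Neumann framework raised to the $n$th power.

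Second I would examine the sign of $\lambda_k^{\,n}$ as $n$ advances. A numerical oscillation is exactly a spurious step-to-step sign reversal in a mode that is absent from the continuous dynamics. If $\lambda_k\geq 0$ then $\lambda_k^{\,n}\geq 0$ for every $n$, so the mode retains a fixed sign and merely grows or decays monotonically, never reversing between consecutive steps; if instead some $\lambda_k<0$ then $\lambda_k^{\,n}$ flips sign with the parity of $n$, producing the sawtooth alternation we identify as a numerical oscillation. Requiring every $\lambda_k\geq 0$ therefore eliminates all sign-reversing modes, and since the error is a superposition of these modes the scheme is free of numerical oscillations. The condition is sufficient rather than necessary because non-negativity forbids the alternation that is the sole mechanism by which these modes oscillate.

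The main obstacle is making the notion of ``oscillation-free'' precise enough that the modal argument actually closes it: one must argue that a superposition of monotone, fixed-sign temporal factors cannot conspire to create step-to-step reversals, which requires keeping the roles of the spatial eigenvector ${\bf v}_k$ and the temporal factor $\lambda_k^{\,n}$ cleanly separated, since it is the latter alone that governs oscillation in time. A secondary technicality is justifying diagonalizability --- or otherwise passing to the Jordan form and checking that the polynomial-times-$\lambda_k^{\,n}$ entries inherit the same sign behavior when $\lambda_k\geq 0$ --- but for the symmetric method matrices arising here diagonalizability with a real spectrum is guaranteed, so this does not obstruct the core argument.
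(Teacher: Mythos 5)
The paper does not actually supply a proof of Theorem \ref{thm:nonnegeig}; it cites \cite{Thomas1995} and then only uses the result operationally, by bounding the von Neumann growth factor $e^{\alpha\dt}\geq 0$. Your modal argument is the standard route behind that citation, and it meshes with the paper's own identification of eigenvalues with error growth factors in (\ref{Growth Factor-Eigenvalue relation}): a negative eigenvalue makes $\lambda_k^{\,n}$ alternate sign with the parity of $n$, which is exactly the period-$2\dt$ sawtooth one calls a numerical oscillation, and non-negativity forbids this mode by mode. So the mechanism you isolate is the right one, and the structure of the argument is sound.

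The step you flag but do not close is the genuine gap: a superposition of fixed-sign, monotone modes need not be monotone at a given spatial point (already $c_1\lambda_1^{\,n}-c_2\lambda_2^{\,n}$ with $0<\lambda_2<\lambda_1<1$ can change sign once), so ``each mode is non-oscillatory'' does not by itself yield ``the solution is non-oscillatory.'' Two ways to finish: either adopt the convention, as the von Neumann framework implicitly does, that oscillation-freeness is a statement about individual eigenmodes; or observe that for $\lambda_k\geq 0$ the scalar sequence $\sum_k a_k\lambda_k^{\,n}$ is an exponential sum with positive bases and hence has at most $K-1$ sign changes over all $n$, so it cannot exhibit the infinitely many zeros (or recurring critical points of the same kind) demanded by the paper's definition of oscillatory behavior. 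Two smaller caveats worth a sentence each: for the nonlinear schemes here $M$ is updated every step, so ${\bf e}^n=M^n{\bf e}^0$ and the eigen-decomposition strictly apply only to the frozen-coefficient (linearized) scheme, which is precisely the setting of the paper's von Neumann analysis; and the relevant method matrices, e.g.\ for the semi-implicit Crank--Nicolson method (\ref{eqn:CN_SI_matrix}), are not symmetric in general, so diagonalizability (or a Jordan-form fallback) should be argued rather than assumed.
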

Following von Neumann stability analysis from section \ref{sub:stability}, we can use the error growth factors previously computed to determine the non-negative eigenvalue condition for a given method.
\begin{example}
To find the non-negative eigenvalue condition for the semi-implicit Crank-Nicolson method for the Fisher-KPP equation (\ref{eqn:FisherIBVP}), we start by bounding our previously computed error growth factor as $e^{\alpha\dt}\geq 0 $ to obtain
$$
1-2r\sin^{2}\left(\frac{\beta\dx}{2}\right) + \dt\rho(1-\tilde{U})\geq 0
$$
which is satisfied at the extrema, assuming $0\leq\tilde{U}\leq1$, by the condition 
\begin{equation}\label{eqn:nonnegCN_SI}
\frac{\delta\dt}{\dx^2}\leq \frac12
\end{equation}
which happens to be the same non-negative eigenvalue condition for the Crank-Nicolson method applied to the linear Test equation (\ref{eqn:LinearTestIBVP}).
\end{example}

A numerical approach to track numerical oscillations uses a slight modification of the standard definitions for oscillatory behavior from ODE research to identify numerical oscillations in solutions to PDEs \cite{Lakoba2016P1}.

\begin{definition}
A continuous function $u(x,t)$ is oscillatory about $K$ if the difference $u(x,t)-K$ has an infinite number of zeros for $a\leq t<\infty$ for any $a$.
Alternately, a function is oscillatory over a finite interval if it has more than two critical points of the same kind (max, min, inflection points) in any finite interval $[a,b]$ \cite{Gao2015}.
\end{definition}
Using the first derivative test, this requires two changes in the sign of the derivative. Using first order finite differences to approximate the derivative results in the following approach to track numerical oscillations.

\begin{definition}[Numerical Oscillations]
By tracking the sign change of the derivative for each spatial component through sequential steps $t_{n-2}, t_{n-1}, t_{n}$ in time, oscillations in time can be determined by the logical evaluation
$$
(U^{n-2}-U^{n-1}) (U^{n-1}-U^{n}) < 0,
$$
which returns true (inequality satisfied) if there is a step where the magnitude oscillates through a critical point. Catching two such critical points will define a numerical oscillation in the solution.
\end{definition}

Crank-Nicolson method is known to be unconditionally stable, but damped oscillations have been found for large time steps. The point at which oscillations begin to occur is an open question, but it is known to be bounded from below by the non-negative eigenvalue condition, which can be rewritten from (\ref{eqn:nonnegCN_SI}) as $\dt \leq \frac{\dx^2}{2\delta}$. Breaking the non-negative eigenvalue condition, however, does not always create oscillations.
\cproblem{\label{prob:stepconditions}
Using the example code found in Appendix \ref{App:Solutions}, uncomment lines 12-15 (deleting \%'s) and comment lines 7-9 (adding \%'s) to use the semi-implicit Crank-Nicolson method. Make sure the default values of $\dx=0.05, \rho=1$ are set in {\tt PDE\_Analysis\_Setup.m} and choose step size $\dt=2$ in {\tt PDE\_ Solution.m}. Notice how badly the non-negative eigenvalue condition $\dt\leq\frac{\dx^2}{2\delta}$ fails and run {\tt PDE\_Solution.m} to see stable oscillations in the solution. Run it again with smaller and smaller $\dt$ values until the oscillations are no longer visible. Save this point as $(\dx,\dt)$. Change $\dx=0.1$ and choose a large enough $\dt$ to see oscillations and repeat the process to identify the lowest time step when oscillations are evident. Repeat this for $\dx=0.5$ and $\dx=1$, then plot all the $(\dx,\dt)$ points in MATLAB by typing {\tt plot(dx,dt)} where {\tt dx,dt} are vector coordinates of the $(\dx,\dt)$ points. On the Figure menu, click {\it Tools},then {\it Basic Fitting},  and check {\it Show equations} and choose a type of plot which best fits the data. Write this as a relationship between $\dt$ and $\dx$.
}

Oscillations in linear problems can be difficult to see, so it is best to catalyze any slight oscillations with oscillatory variation in the initial condition, or for a more extreme response, define the initial condition so that it fails to meet one or more boundary condition. Notice that the IBVP will no longer have a unique theoretical solution, but the numerical method will force an approximate solution to the PDE and match the conditions as best as it can. If oscillations are permitted by the method, then they will be clearly evident in this process. 
\resproject{
In {\tt PDE\_Analysis\_Setup.m}, set {\tt rho=0} on line 12 and multiply line 16  by zero to keep the same number of elements, \\
{\tt u0 = 0*polyval(polyfit(\dots}\\
Investigate lowest $\dt$ values when oscillations occur for $\dx=0.05,0.1,0.5,1$ and fit the points with the Basic Fitting used in Challenge Problem \ref{prob:stepconditions}. Then, investigate a theoretical bound on the error growth factor (\ref{eqn:growthfactor_CNtest}) for the Crank-Nicolson method to the Test equation which approximates the fitting curve. It may be helpful to look for patterns in the computed eigenvalues at those $(\dx,\dt)$ points.
}

\section{Parameter Analysis}\label{sec:parameters}
Though parameters are held fixed when solving a PDE, varying their values can have an interesting impact upon the shape of the solution. We will focus on how changing parameters values and initial conditions affect the end behavior of IBVPs. For example, Figure \ref{fig:f5steadystatestab} compares two very different steady-state solutions based upon similar sinusoidal initial conditions. Taking the limit of parameters in a given model can help us see between which limiting functions the steady-state solutions tend.


\begin{figure}
\centerline{
\includegraphics[scale=.45]{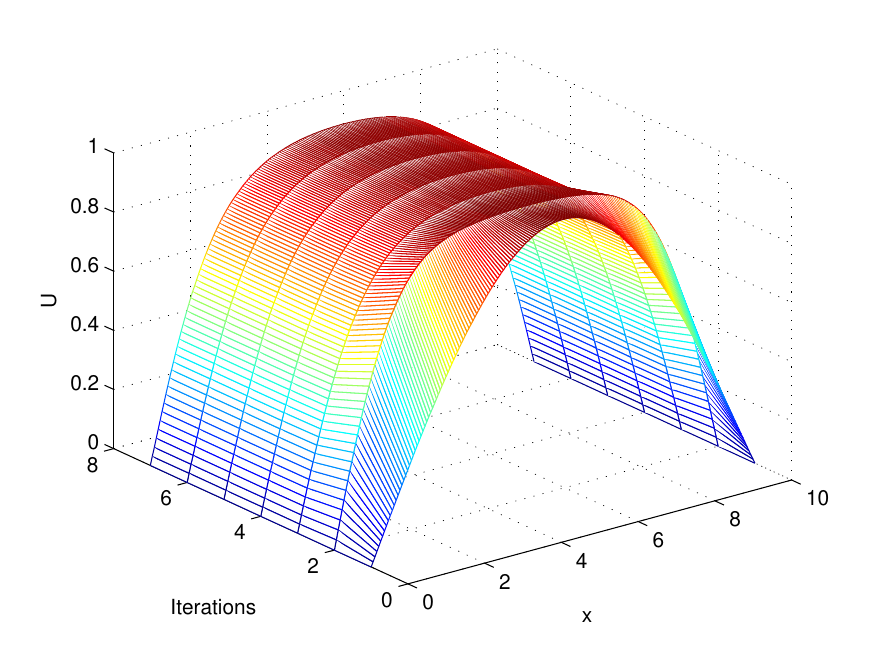} 
\hfil
\includegraphics[scale=.45]{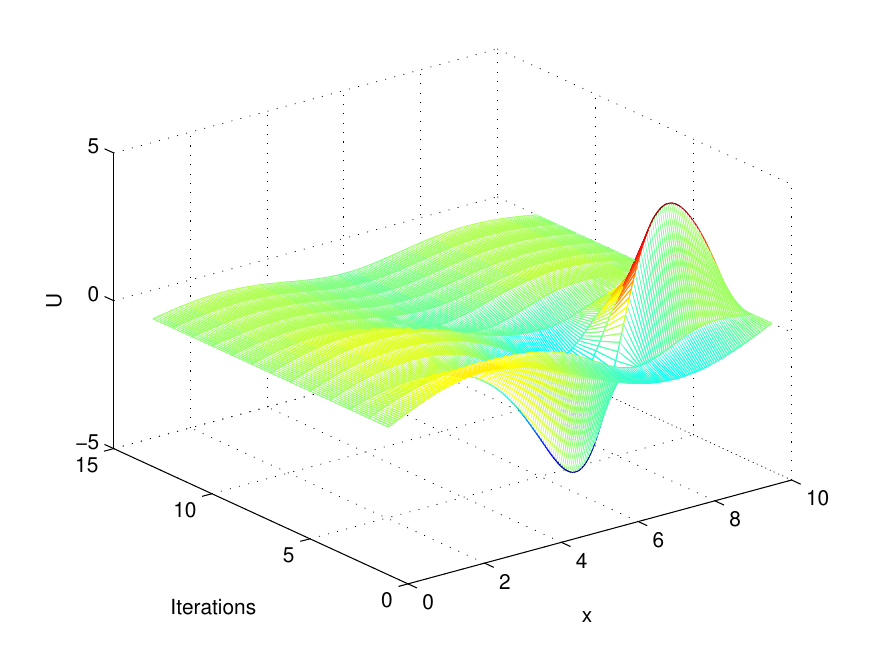} 
}
\caption{Example graphs of Newton's method using the parameter values $a=0, b=0, L=10, \delta = 1, \dx=\frac{1}{20}, \rho=1, {\rm degree}=2, c=1$ and replacing the default initial condition with (left) $\sin\left(\frac{\pi t}{L}\right)$ and (right) $\sin\left(\frac{2\pi t}{L}\right)$ in {\tt PDE\_Analysis\_Setup.m} found in Appendix \ref{App:Setup}.}
\label{fig:f34steadystate}
\end{figure}

\subsection{Steady-State Solutions}
\label{sub:steadystate}

To consider steady-state solutions to the general reaction diffusion model (\ref{eqn:IBVP}), we set the time derivative to zero to create the boundary-value problem (BVP) with updated $u\equiv u(x)$ satisfying

\begin{eqnarray}\label{eqn:BVP}
0 &=& \delta u_{xx} + R(u),\\\nonumber
&&u(0) = a,\\\nonumber 
&&u(L) = b.
\end{eqnarray}

LeVeque \cite{LeVeque2007}, Marangell et. al. \cite{HarleyMarangell2015}, and Aron et.al. \cite{Aron2014} provide insightful examples and helpful guides for analyzing steady-state and traveling wave solutions of nonlinear PDEs. We will follow their guidelines here in our analysis of steady-states of the Fisher-KPP equation (\ref{eqn:FisherIBVP}). Notice that as $\delta\to \infty$, equation (\ref{eqn:BVP}) simplifies as $0 = u_{xx} + \frac1\delta R(u)\to 0=u_{xx}$. Under this parameter limit, all straight lines are steady-states, but only one,
\begin{equation} \label{eqn:f_infty}
f_\infty(x) =\frac{b-a}{L}x+a,
\end{equation}
fits the boundary conditions $u(0,t)=a,\ u(L,t)=b$. On the other hand, as $\delta\to 0$, solutions to equation (\ref{eqn:BVP}) tend towards equilibrium points, $0=R(\bar{u})$, of the reaction function inside the interval $(0,L)$. Along with the fixed boundary conditions, this creates discontinuous limiting functions 
\begin{equation} \label{eqn:f_0}
f_0^{(\bar{u})}(x) = \left\{\begin{array}{ll} a,& x=0\\ \bar{u},& 0<x<L,\\ b,& x=L \end{array} \right.
\end{equation}
defined for each equilibrium point $\bar{u}$.

\begin{example} 
The Fisher-KPP equation (\ref{eqn:FisherIBVP}) reduces to the BVP with updated $u\equiv u(x)$ satisfying 
\begin{eqnarray}\label{eqn:steadystate} 
0 &=& \delta u_{xx} + \rho u(1-u),\\\nonumber
&&u(0) = 0,\\\nonumber 
&&u(10) = 1, 
\end{eqnarray}
As parameter $\delta$ varies, the steady-state solutions of (\ref{eqn:steadystate}) transform from one of the discontinuous limiting functions
\begin{equation} \label{eqn:ex-f_0}
f_0^{(\bar{u})}(x) = \left\{\begin{array}{ll} 0,& x=0\\ \bar{u},& 0<x<10,\\ 1,& x=10 \end{array} \right.
\end{equation}
for equilibrium $\bar{u}=0$ or $\bar{u}=1$, towards the line 
\begin{equation} \label{eqn:ex-f_infty}
f_\infty(x) =\frac{1}{10}x.
\end{equation}
\end{example}

\subsection{Newton's Method}\label{sub:Newtonmethod}
The Taylor series expansion for an analytical function towards a root, that is $0 = f(x_{n+1})$, gives us 
\begin{equation}
0 = f(x_n) + \dx f'(x_n) + O\left(\dx^2\right)
\end{equation}
Truncating $O\left(\dx^2\right)$ terms, and expanding $\dx = x_{n+1}-x_n$, an actual root of $f(x)$ can be approximated using Newton's method \cite{Burden2011}. Newton's iteration method for a single variable function can be generalized to a vector of functions ${\bf G}(u)$ solving a (usually nonlinear) system of equations $0={\bf G}(u)$ resulting in a sequence of vector approximations $\{{\bf U}^{(k)}\}$. Starting near enough to a vector of roots which is stable, the sequence of approximations will converge to it: $\lim_{k\to \infty} {\bf U}^{(k)}={\bf U}_s$. Determining the intervals of convergence for a single variable Newton's method can be a challenge; even more so for this vector version. Note that the limiting vector of roots is itself a discretized version of a solution, ${\bf U}_s=u({\bf x})$, to the BVP system (\ref{eqn:BVP}).
\begin{definition}[Vector Form of Newton's Method]
For system of equations $0={\bf G}(u)$, 
\begin{equation}\label{eqn:Newton}
{\bf U}^{(k+1)} = {\bf U}^{(k)} - J^{-1}({\bf U}^{(k)}) {\bf G}({\bf U}^{(k)})
\end{equation}
where $J({\bf U}^{(k)})$ is the Jacobian matrix, $\frac{\partial {\bf G}_i(u)}{\partial U_j}$, which is the derivative of ${\bf G}(u)$ in $\mathbb{R}^{M\times M}$ where M is the number of components of  ${\bf G}({\bf U}^{(k)})$ \cite{LeVeque2007}.
\end{definition}

Using the standard centered difference, we can discretize the nonlinear BVP (\ref{eqn:BVP}) to obtain the system of equations $0 = {\bf G}\left( {\bf U}^n\right)$
\begin{equation}
0 = \delta D {\bf U}^n + \rho {\bf U}^n\left( 1-{\bf U}^n \right)
\end{equation}

We need an initial guess for Newton’s method, so we will use the initial condition $u_0$ from the IBVP (\ref{eqn:FisherIBVP}). Table \ref{tab:Newton} shows the change in the solution measured by $||{\bf U}^{(k+1)}-{\bf U}^{(k)}||_\infty=||J^{-1}{\bf G}||_\infty$ in each iteration. As expected, Newton's method appears to be converging quadratically, that is $\epsilon^{(k+1)} = O(\epsilon^{(k)})^2$ according to Definition \ref{def:order-conv}. 

\begin{definition}\label{def:order-conv}
Given an iteration which converges, the {\it order of convergence} $N$ is the power function relationship which bounds subsequent approximation errors as
\begin{equation}
\epsilon^{(k+1)} = O\left((\epsilon^{(k)})^N \right).
\end{equation}

\end{definition}

Note that scientific notation presents an effective display of these solution changes. You can see that the powers are essentially doubling every iteration in column 4 of Table \ref{tab:Newton} for the Approximate Error$^a$ in the Fisher-KPP equation. This second order convergence can be more carefully measured by computing
$$
{\rm order}^{(k)} = {\rm round}\left( \frac{\log\left(\epsilon^{(k+1)}\right)}{\log\left(\epsilon^{(k)}\right)} \right)
$$
where rounding takes into account the variable coefficient $C$ in definition \ref{def:order-conv}. For instance, values in column 4 slightly above 2 demonstrate $C<1$ and those slightly below 2 demonstrate $C>1$. Thus it is reasonable to round these to the nearest integer.
This is not evident, however, for the Test equation because the error suddenly drops near the machine tolerance and further convergence is stymied by round-off error.
If we start with a different initial guess ${\bf U}^{(0)}$  (but still close enough to this solution), we would find that the method still converges to this same solution. Newton's method can be shown to converge if we start with an initial guess that is
sufficiently close to a solution. How close depends on the nature of the problem. For more sensitive problems one might have to start extremely close.
In such cases it may be necessary to use a technique such as continuation to find suitable initial data by varying a parameter, for example \cite{LeVeque2007}.

\begin{figure}[t]
\centerline{
\includegraphics[scale=.65]{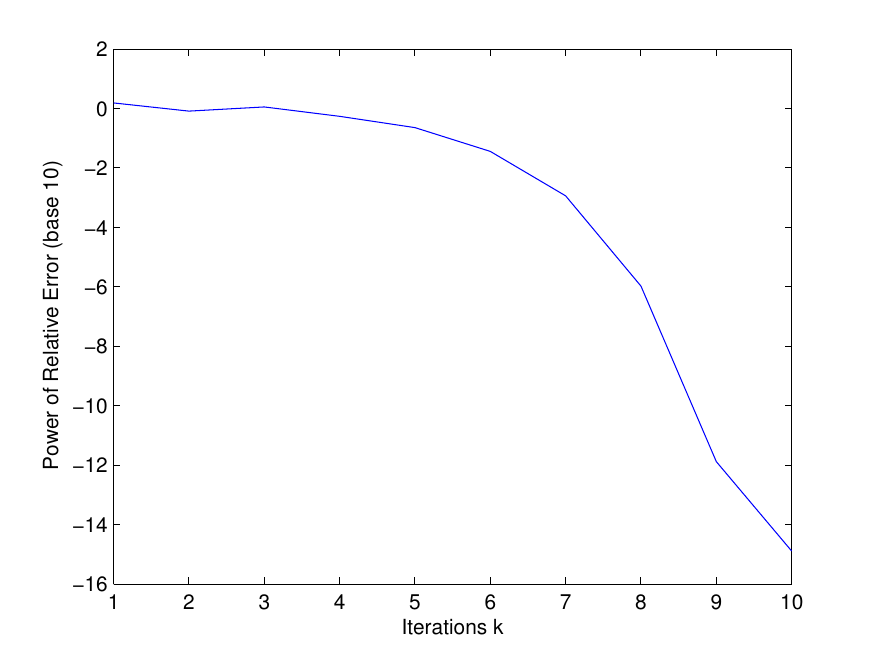} 
}
\caption{Log-plot of approximate relative error,  $\varepsilon^(k)_r=\log_10(\max|{\bf U}^(k+1)-{\bf U}^(k)|)$ in Newton's method for the 
Fisher-KPP equation (\ref{eqn:FisherIBVP})  using the default parameter values $a=0, b=1, L=10, \delta=1, \dx=0.05, \rho=1, {\rm degree}=2, c=\frac{1}{3}$ and initial condition as given in  {\tt PDE\_Analysis\_Setup.m} found in Appendix \ref{App:Setup}. }
\label{fig:lf5convergence}
\end{figure}

\begin{table}
\caption{Verifying Convergence of Newton's Method}
\label{tab:Newton} 
\begin{tabular}{p{1.5cm}p{2.75cm}p{1.75cm}p{2.75cm}p{1.75cm}}
\hline\noalign{\smallskip}
 & \multicolumn{2}{c}{\bf Test Equation} & \multicolumn{2}{c}{\bf Fisher-KPP Equation}  \\
\centering Iteration & \centering Approximate Error$^a$ & \centering Order of Convergence$^b$ & \centering Approximate Error$^a$ & \centering Order of Convergence$^b$ \tabularnewline
\noalign{\smallskip}\svhline\noalign{\smallskip}
1 & 1.6667e-01 & 18.2372 & 1.5421e+00 &0 (-0.4712)\\
2 & 6.4393e-15 & 0.9956 & 8.1537e-01 & -1 (-0.5866)\\
3 & 7.4385e-15 & 1.0585& 1.1272e+00 & -5 (-5.0250)\\
4 & tol.$^c$ reached  & & 5.4789e-01 & 2 (2.4533)\\
5 & & & 2.2853e-01 & 2 (2.2568)\\
6 &  & & 3.5750e-02 & 2 (2.0317)\\
7 &  & & 1.1499e-03 & 2 (2.0341)\\
8 &  & & 1.0499e-06 & 2 (1.9878)\\
9 &  & & 1.3032e-12 & 2 (1.2875)\\
10 & & & tol.$^c$ reached & \\
\noalign{\smallskip}\hline\noalign{\smallskip}
\end{tabular}\\
$^a$ Approximate error measured as the maximum absolute difference ($||\cdot||_\infty$) between one iteration and the next.\\
$^b$ Order of convergence is measured as the power each error is raised to produce the next: $\epsilon_{i+1} = \epsilon_i^p \to p = \log(\epsilon_{i+1})/\log(\epsilon_i)$\\
$^c$ Stopping criterion is reached error is less than tol = $10\epsilon = 2.2204e-15$.


\end{table}


The solution found in Figure \ref{fig:fl6delta_range} for $\delta=1$ is an isolated (or locally unique) solution in the sense that there are no other solutions very nearby. However, it does not follow that this is the unique solution to the BVP (\ref{eqn:steadystate}) as shown by the convergence in Figure \ref{fig:f5steadystatestab} to another steady-state solution. In fact, this steady-state is unstable for the Fisher-KPP equation (\ref{eqn:FisherIBVP}), as demonstrated in Figure \ref{fig:f5steadystatestab}.

 


\begin{figure}[t]
\centerline{
\includegraphics[scale=.65]{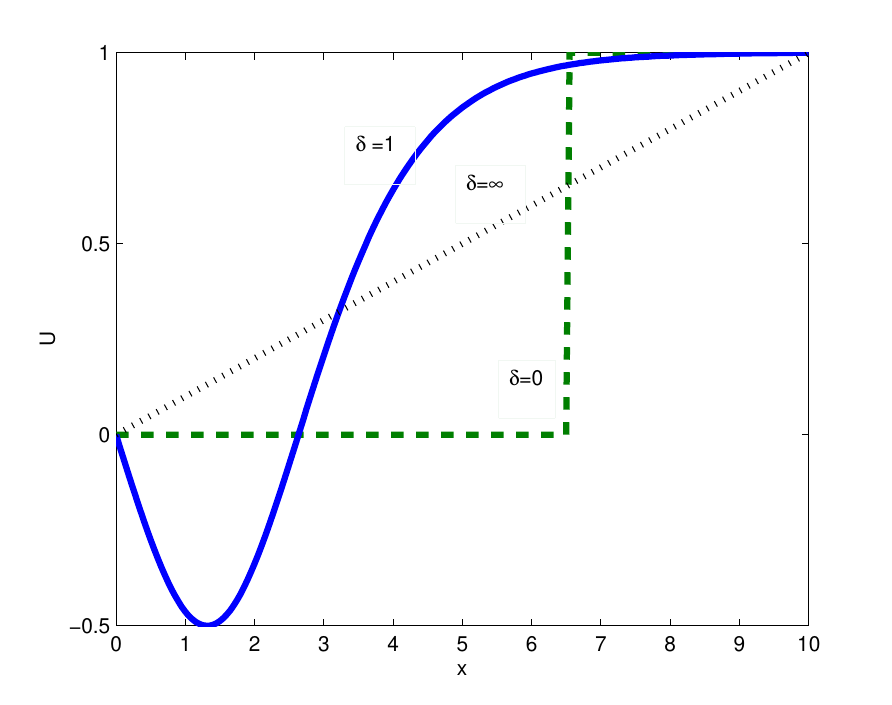} 
}
\caption{Graph of three steady-state solutions to BVP (\ref{eqn:steadystate}) by Newton's method for a $\delta$-parameter range of (1) $\delta=0$, (2) $\delta=1$, and (3) $\delta\to\infty$ using the other default parameter values $a=0, b=1, L=10, \dx=\frac{1}{20}, \rho=1, {\rm degree}=2, c=\frac{1}{3}$ and initial condition as given in {\tt PDE\_Analysis\_Setup.m} found in Appendix \ref{App:Setup}.}
\label{fig:fl6delta_range}
\end{figure}


\begin{svgraybox}
{\bf Project Idea 3.} 
For $\delta=1$, use Newton's method in example code in section \ref{App:Solutions} in the Appendix to investigate steady-state solutions to the bounded Fisher-KPP equation (\ref{eqn:steadystate}). Note the behavior of limiting solutions found in relation to the initial condition used. Also, note the shape of functions for which Newton's method is unstable. One example behavior for a specific quadratic is shown in Figure \ref{fig:fl6delta_range}
\end{svgraybox}

\begin{svgraybox}
{\bf Project Idea 4.} 
Find an initial condition which tends to a steady-state different than either $f_0(x)$ or $f_\infty(x)$. Investigate how the shape of the solution changes as $\delta \to 0$ and as $\delta \to \infty$. Does it converge to a limiting function at both ends?
\end{svgraybox}

\begin{figure}[t]
\includegraphics[scale=.40]{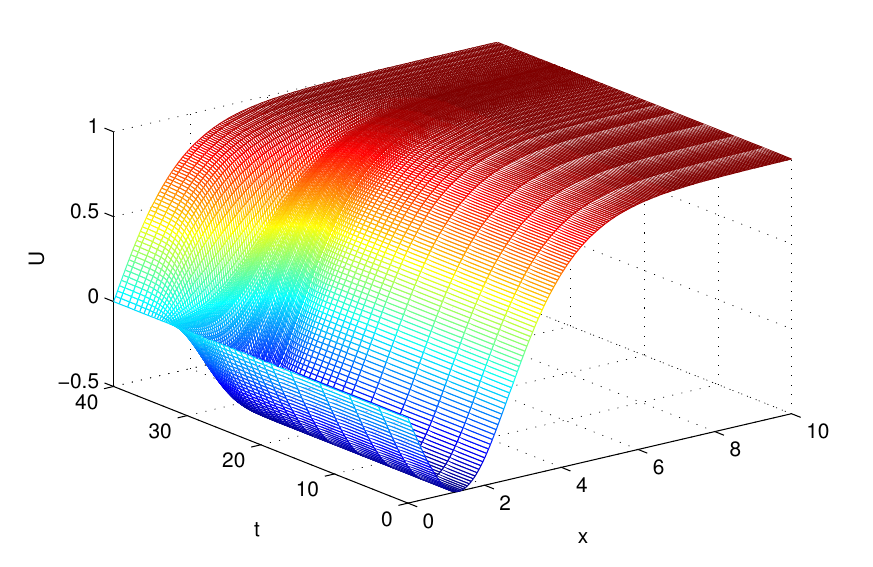} 
\includegraphics[scale=.40]{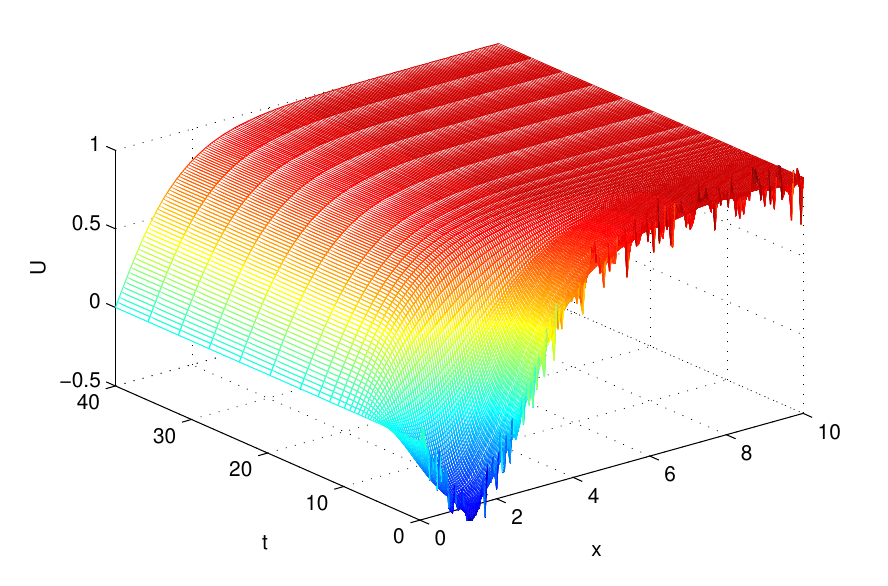} 
\caption{Demonstration of the instability of the steady-state in Figure \ref{fig:fl6delta_range} for $\delta=1$ as an initial condition for the Fisher-KPP equation (\ref{eqn:FisherIBVP}) adding (a) no additional noise and (b) some normally distributed noise to slightly perturb the initial condition from the steady-state using the other default parameter values $a=0, b=1, L=10, \dx=\frac{1}{20}, \rho=1, {\rm degree}=2, c=\frac{1}{3}$ and first initial condition as given in {\tt PDE\_Analysis\_Setup.m} found in Appendix \ref{App:Setup}. }
\label{fig:f5steadystatestab}
\end{figure}

\begin{svgraybox}
{\bf Project Idea 5. } 
Once you find an initial condition which tends to a steady-state different than either $f_0(x)$ or $f_\infty(x)$, run {\tt PDE\_Solution.m} in Appendix \ref{App:Solutions} to investigate the time stability of this steady-state using the built-in solver initialized by this steady-state perturbed by some small normally distributed noise. 
\end{svgraybox}
Note, the {\tt randn(i,j)} function  in MATLAB is used to create noise in {\tt PDE \_Solution.m} using a vector of normally distributed psuedo-random variables with mean 0 and standard deviation 1.

Newton's method is called a local method since it converges to a stable solution only if it is near enough. More precisely, there is an open region around each solution called a basin, from which all initial functions behave the same (either all converging to the solution or all diverging) \cite{Burden2011}.

\begin{svgraybox}
{\bf Project Idea 6. } 
It is interesting to note that when $b=0$ in the Fisher-KPP equation, $\delta$-limiting functions coalesce, $f_\infty(x)=f_0(x)\equiv 0$, for $\bar{u}=0$. Starting with $\delta=1$, numerically verify that the steady-state behavior as $\delta\to\infty$ remains at zero. Though there are two equilibrium solutions as $\delta\to0$, $\bar{u}=0$ and $\bar{u}=1$, one is part of a continuous limiting function qualitatively similar to $f_\infty(x)$ while the other is distinct from $f_\infty(x)$ and discontinuous. Numerically investigate the steady-state behavior as $\delta\to0$ starting at $\delta=1$. Estimate the intervals, called Newton's basins, which converge to each distinct steady-state shape or diverge entirely. Note, it is easiest to distinguish steady-state shapes by number of extremum. For example, smooth functions tending towards $f_0^{(1)}(x)$ have one extrema.
\end{svgraybox}

\subsection{Traveling Wave Solutions}
The previous analysis for finding steady-state solutions can also be used to find asymptotic traveling wave solutions to the initial value problem 
\begin{eqnarray}\label{eqn:IVP}
u_t = \delta u_{xx} + R(u),\\\nonumber
u(x,0) = u_0(x),\ -\infty <x< \infty,\ t\geq 0,
\end{eqnarray}
by introducing a moving coordinate frame: $z=x-ct$, 
with speed $c$ where $c>0$ moves to the right. Note that by the chain rule for $u(z(x,t))$ 
\begin{eqnarray}
u_t = u_z z_t = -c u_z \\\nonumber
u_x = u_z z_x = u_z\\\nonumber
u_{xx} = (u_z)_z z_x = u_{zz}.
\end{eqnarray}
Under this moving coordinate frame, equation (\ref{eqn:IVP}) transforms into the boundary value problem 
\begin{eqnarray}\label{eqn:traveling}
0 &=& \delta u_{zz} +cu_z+ R(u),\\\nonumber
&&
-\infty <z< \infty, 
\end{eqnarray}

\begin{svgraybox}
{\bf Project Idea 7. } 
Modify the example code {\tt PDE\_Analysis\_Setup} to introduce a positive speed starting with $c=2$ (updated after the initial condition is defined to avoid a conflict) and adding {\tt +c/dx*spdiags(ones(M-2,1)*[-1 1 ],[-1 0], M-2, M-2)} to the line defining matrix D and update {\tt BCs(1)} accordingly. Once you have implemented this transformation correctly, you can further investigate the effect the wave speed $c$ has on the existence and stability of traveling waves as analyzed in \cite{HarleyMarangell2015}. Then apply this analysis to investigate traveling waves of the FitzHugh-Nagumo equation (\ref{eqn:FitzHughNagumo}) as steady-states of the transformed equation.
\end{svgraybox}



\section{Further Investigations}\label{sec:further-investigation}
Now that we have some experience numerically investigating the Fisher-KPP equation, we can branch out to other relevant nonlinear PDEs.
\begin{svgraybox}
{\bf Project Idea 8. } 
Use MATLAB's {\tt ode23s} solver to investigate asymptotic behavior of other relevant reaction-diffusion equations, such as the FitzHugh-Nagumo equation (\ref{eqn:FitzHughNagumo}) which models the phase transition of chemical activation along neuron cells \cite{FitzHughNagumo}. First investigate steady-state solutions and then transform the IBVP to investigate traveling waves. Which is more applicable to the model?  A more complicated example is the Nonlinear Schr\"odinger equation (\ref{eqn:NLS}), whose solution is a complex-valued nonlinear wave which models light propagation in fiber optic cable \cite{Lakoba2016P1}.
\end{svgraybox}
For the FitzHugh-Nagumo equation (\ref{eqn:FitzHughNagumo})
\begin{eqnarray} \label{eqn:FitzHughNagumo}
u_t &=& \delta u_{xx} + \rho u(u-\alpha)(1-u),\ 0<\alpha<1\\\nonumber
&&u(x,0) = u_0(x),\\\nonumber
&&u(0,t) = 0\\\nonumber
&&u(10,t) = 1
\end{eqnarray}

For the Nonlinear Schr\"odinger (\ref{eqn:NLS}), the boundary should not interfere or add to the propagating wave. One example boundary condition often used is setting the ends to zero for some large $L$ representing $\infty$.
\begin{eqnarray} \label{eqn:NLS}
u_t &=& i\delta u_{xx} - i\rho |u|^2 u,\\\nonumber
&&u(x,0) = u_0(x),\\\nonumber
&&u(-L,t) = 0\\\nonumber
&&u(L,t) = 0
\end{eqnarray}


Additionally, once you have analyzed a relevant model PDE, it is helpful to compare end behavior of numerical solutions with feasible bounds on the physical quantities modeled. In modeling gene propagation, for example, with the Fisher-KPP equation (\ref{eqn:FisherIBVP}), the values of $u$ are amounts of saturation of the advantageous gene in a population. As such, it is feasible for $0<u<1$ as was used in the stability analysis. The steady-state solution shown in Figure \ref{fig:fl6delta_range}, which we showed was unstable in Figure \ref{fig:f5steadystatestab}, does not stay bounded in the feasible region. This is an example where unstable solutions represent a non-physical solution. While investigating other PDEs, keep in mind which steady-states have feasible shapes and bounds. Also, if you have measurement data to compare to, consider which range of parameter values yield the closest looking behavior. This will help define a feasible region of the parameters.


%
%
%
\appendix
\section*{Appendix}
\addcontentsline{toc}{section}{Appendix}

\subsection{Proof of Method Accuracy}\label{App:AccuracyProof}
Numerical methods, such as those covered in section \ref{sub:methods}, need to replace the underlying calculus of a PDE with basic arithmetic operations in a consistent manner (Definition \ref{def:consistency}) so that the numerical solution accurately approximates the true solution to the PDE. To determine if a numerical method accurately approximates the original PDE, all components are rewritten in terms of common function values using Taylor series expansions (Definition \ref{def:Taylor}). 

\begin{definition}\label{def:Taylor}
Taylor series are power series representations of functions at some point $x=x_{0}+\dx$ using derivative information about the function at nearby point $x_{0}$.
\begin{equation}\label{eqn:cont-Taylor}
f(x)	=f(x_{0})+f'(x_{0})\dx+\frac{f''(x_{0})}{2!}\dx^{2}+...+\frac{f^{(n)}(x_{0})}{n!}\dx^{n}+O\left(\dx^{n+1} \right),
\end{equation} 
where the local truncation error in big-O notation, $\epsilon_L = O\left(\dx^{n+1}\right)$, means $||\epsilon_L|| \leq C\left(\dx^{n+1}\right)$ for some positive $C$ \cite{Burden2011}.
\end{definition}
For a function over a discrete variable ${\bf x}=(x_m)$, the series expansion can be rewritten in terms of indices $U_m \equiv U(x_m)$ as
\begin{equation}\label{eqn:disc-Taylor}
U_{m+1}=U_m+U_m'\dx+\frac{U_m''}{2!}\dx^{2}+...+\frac{U_m^{(n)}}{n!}\dx^{n}+O\left(\dx^{n+1}\right),
\end{equation}

\begin{definition}\label{def:consistency}
A method is {\it consistent} if the difference between the PDE and the method goes to zero as all the step sizes, $\dx, \dt$ for example, diminish to zero. 
\end{definition}
To quantify how accurate a consistent method is, we use the truncated terms from the Taylor series expansion to gauge the order of accuracy  \cite{Burden2011}.
\begin{definition}\label{def:order-accuracy}
The {\it order of accuracy} in terms of an independent variable x is the lowest power $p$ of the step size $\dx$ corresponding to the largest term in the truncation error. Specifically, 
\begin{equation}
{\rm PDE - Method} = O\left(\dx^p\right).
\end{equation}

\end{definition}
\begin{theorem}\label{thm:consistency}
If a numerical method for a PDE has orders of accuracy greater than or equal to one for all independent variables, then the numerical method is consistent. 
\end{theorem}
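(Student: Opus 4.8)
The plan is to unwind the two definitions involved and show that the hypothesis on the orders of accuracy forces the truncation error to vanish in the limit. First I would recall that the quantity of interest is the local truncation error $\tau = \text{PDE} - \text{Method}$, and that consistency (Definition \ref{def:consistency}) is precisely the statement that $\tau \to 0$ as every step size $\Delta x, \Delta t, \ldots$ tends to zero. So the goal reduces entirely to controlling $\tau$ as all step sizes shrink simultaneously.

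Next I would invoke the structure produced by the Taylor expansions used to define order of accuracy. Writing the independent variables as $x_1,\ldots,x_d$ with step sizes $h_i = \Delta x_i$, the expansion of the method expresses $\tau$ as a sum of monomials in the $h_i$, so that the order of accuracy $p_i$ in variable $x_i$ (Definition \ref{def:order-accuracy}) records the lowest power of $h_i$ that appears. This gives a decomposition $\tau = \sum_i O\!\left(h_i^{p_i}\right) + (\text{mixed higher-order terms})$. By the big-O convention from Definition \ref{def:Taylor}, each contribution obeys a bound $\left\| O\!\left(h_i^{p_i}\right) \right\| \leq C_i\, h_i^{p_i}$ for some positive constant $C_i$.

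Then I would apply the hypothesis $p_i \geq 1$ for every $i$. Since each exponent is strictly positive and each step size is positive, each bound $C_i\, h_i^{p_i} \to 0$ as $h_i \to 0$; the mixed terms, being products of positive powers of the step sizes, vanish as well. A finite sum of terms each tending to zero tends to zero, so $\tau \to 0$ as all step sizes tend to zero, which is exactly the definition of consistency.

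The main obstacle I anticipate is not the limiting argument itself—positivity of the exponents makes that step immediate—but justifying the decomposition of $\tau$ into per-variable contributions controlled by a single $h_i$, and in particular ensuring the multivariable expansion leaves no $O(1)$ (zeroth-order) remainder. Such a surviving term would correspond to order of accuracy zero in some variable and would not vanish in the limit. The hypothesis $p_i \geq 1$ for all $i$ is precisely what excludes this, so the crux of the write-up is to phrase the argument so that this exclusion is made explicit rather than tacitly assumed.
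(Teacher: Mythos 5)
Your proposal is correct and follows essentially the same route as the paper: both decompose the truncation error ${\rm PDE}-{\rm Method}$ into per-variable contributions $O\left(\dx_i^{p_i}\right)$ and observe that each vanishes as the step sizes shrink because $p_i\geq 1$. Your added remark about excluding a surviving $O(1)$ term is a reasonable elaboration of what the paper treats implicitly, but it does not change the argument.
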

\begin{proof}
Given that the orders of accuracy $p_1,...,p_k\geq1$ for independent variables $x_1,...,x_k$, then the truncation error
\begin{equation}
{\rm PDE - Method} = O\left(\dx_1^{p_1}\right) +\dots+ O\left(\dx_k^{p_k}\right)
\end{equation}
goes to zero as $\dx_1,...,\dx_k\to 0$ since $\dx_i^{p_i}\to0$ as $\dx_i$ if $p_i\geq 1$.
\end{proof}

\begin{example}(Semi-Implicit Crank-Nicolson Method) \label{ex:CNSI-accuracy-time}
The accuracy in space for the Semi-Implicit Crank-Nicolson method is defined by the discretizations of the spatial derivatives. Following example (ref{ex:discretization}), a centered difference will construct  a second order accuracy in space.
\begin{eqnarray}
&&\frac{\delta}{\dx^2}\left(U_{m-1}^{n}-2U_{m}^{n} +U_{m+1}^{n}\right) -  \delta(U_m^n)_{xx} \\\nonumber
&=& \frac{\delta}{\dx^2}\left(\dx^2(U_m^n)_{xx}+O\left(\dx^4\right)\right) -  \delta(U_m^n)_{xx}\\\nonumber
&=& O\left(\dx^2\right).
\end{eqnarray}
Then the discretized system of equations includes a triadiagonal matrix D with entries $D_{i,i-1}=\frac{\delta}{\dx^2}, D_{i,i}=-\frac{2\delta}{\dx^2}, D_{i,i+1}=\frac{\delta}{\dx^2}$ and a resultant vector ${\bf R}({\bf U}^{n})$ discretized from ${\bf R}(u)$.
The order of accuracy in time can be found by the difference between the method and the spatially discretized ODE system. For the general reaction-diffusion equation (\ref{eqn:IBVP}), 
\begin{eqnarray}\label{eqn:CNSI-accuracy-time}
&&\frac{{\bf U}^{n+1}-{\bf U}^n}{\dt} -  \frac{1}{2} \left(D {\bf U}^{n} + D{\bf U}^{n+1}+{\bf R}({\bf U}^{n}) + {\bf R}({\bf U}^{n+1})\right)\\ \nonumber
&&\hspace{1in}- \left({\bf U}^n_t - D{\bf U}^n -{\bf R}({\bf U}^n) \right)\\ \nonumber
&=& \left( {\bf U}^n_t + O(\dt)\right) - \frac{1}{2} \left(2D {\bf U}^{n} + O(\dt)+2{\bf R}({\bf U}^{n}) + O(\Delta {\bf U})\right)\\\nonumber
&&\hspace{1in} - \left({\bf U}^n_t - D{\bf U}^n -{\bf R}({\bf U}^n) \right)\\ \nonumber
&=& O(\dt)
\end{eqnarray}
since $O(\Delta {\bf U}) =  {\bf U}^{n+1}-{\bf U}^n = \dt {\bf U}^n_t +O(\dt^2)$.
\end{example}

\begin{example}(Improved Euler Method)
Similar to Example \ref{ex:CNSI-accuracy-time}, the accuracy in space for the Improved Euler version of the Crank-Nicolson method (\ref{eqn:CN_IE}) is defined by the same second order discretizations of the spatial derivatives.
The order of accuracy in time can be found by the difference between the method and the spatially discretized ODE system. For the general reaction-diffusion equation (\ref{eqn:IBVP}), 

\begin{eqnarray}\label{eqn:CNIE-accuracy-time}
&&\frac{{\bf U}^{n+1}-{\bf U}^n}{\dt} -  \frac{1}{2} \left(D {\bf U}^{n} + D {\bf U}^*+{\bf R}({\bf U}^{n}) + {\bf R}({\bf U}^*)\right)\\  \nonumber
&&\hspace{1in} - \left({\bf U}^n_t - D{\bf U}^n -{\bf R}({\bf U}^n) \right)\\ \nonumber
&=& \left( {\bf U}^n_t + \frac{\dt}{2}{\bf U}^n_{tt}+ O\left(\dt^2\right)\right) - \frac{1}{2} \left(2D {\bf U}^{n} + \dt\left(D^2 {\bf U}^{n}+D{\bf U}^{n}{\bf R}({\bf U}^{n})\right) \right. \\ \nonumber
&&\hspace{1in} \left.+ O\left(\dt^2\right)+2{\bf R}({\bf U}^{n}) +\left(D {\bf U}^{n}+{\bf R}({\bf U}^{n})\right){\bf R}_u({\bf U}^{n}) + O\left(\dt^2\right)\right) \\ \nonumber
&&\hspace{1in} - \left({\bf U}^n_t - D{\bf U}^n -{\bf R}({\bf U}^n) \right)\\ \nonumber
&=& O\left(\dt^2\right)
\end{eqnarray}
since 
\begin{eqnarray*}
{\bf U}^n_{tt} &=& D^2 {\bf U}^{n} + D{\bf R}({\bf U}^{n})+ \left(D {\bf U}^{n}+{\bf R}({\bf U}^{n})\right){\bf R}_u({\bf U}^{n}),\\
D{\bf U}^* &=& D {\bf U}^n + \dt\left(D^2 {\bf U}^{n} + D{\bf R}({\bf U}^{n})\right),\\
{\bf R}({\bf U}^*) &=& {\bf R}({\bf U}^{n}) + \dt \left(D {\bf U}^{n}+{\bf R}({\bf U}^{n})\right){\bf R}_u({\bf U}^{n}) + O\left(\dt^2\right).
\end{eqnarray*}
\end{example}

\subsection{Main Program for Graphing Numerical Solutions}\label{App:Solutions}
{\scriptsize 
 \verbatiminput{./PDE_Solution.m}
}

\subsection{Support Program for Setting up Numerical Analysis of PDE}\label{App:Setup}
{\scriptsize 
\verbatiminput{./PDE_Analysis_Setup.m}
} 

\subsection{Support Program for Running the Semi-Implicit Crank-Nicolson Method}\label{App:CN}
{\scriptsize 
 \verbatiminput{./CrankNicolson_SI.m}
} 

\subsection{Support Program for Verifying Method Accuracy}\label{App:Accuracy}
{\scriptsize 
 \verbatiminput{./Method_Accuracy_Verification.m}
} 

\subsection{Support Program for Running Newton's Method for ODE Systems}\label{App:Newton}
{\scriptsize 
 \verbatiminput{./Newton_System.m}
}



%
%
%

\end{document}